\theoremstyle{plain}
\newtheorem{theorem}{Theorem}[section]
\newtheorem{proposition}[theorem]{Proposition}
\newtheorem{definition}[theorem]{Definition}
\newtheorem{lemma}[theorem]{Lemma}
\theoremstyle{remark}
\newtheorem{remark}[theorem]{Remark}
\author{Antonio Lerario and Francesca Tripaldi}
\begin{document}

\title[Multicomplexes on Carnot groups]
{ Multicomplexes on Carnot groups and their associated spectral sequence}
\keywords{Carnot groups, multicomplexes, spectral sequences, Rumin complex}
\subjclass{}
\maketitle
\begin{abstract}
    The aim of this paper is to give a thorough insight into the relationship between the Rumin complex on Carnot groups and the spectral sequence obtained from the filtration on forms by homogeneous weights that computes the de Rham cohomology of the underlying group. %In order to do so, we leverage the work done by Livernet, Whitehouse and Ziegenhagen in \cite{livernet2020spectral} on multicomplexes, where they give an explicit formulation of the differentials $\partial_r$ that appear at the successive steps of the spectral sequence.
\end{abstract}

\section{Introduction}

Within the context of Carnot groups, an important algebraic construction that has had many different applications is that of the Rumin complex. Often denoted by $(E_0^\ast,d_c)$, this is a subcomplex of the de Rham complex $(\Omega^\ast,d)$ of the underlying Carnot group $\mathbb{G}$ which was developed as a \textit{more intrinsic} choice than the entire de Rham complex on Carnot groups.

In an intrinsic construction, one would in fact expect the stratification of the Lie algebra (see Definitions \ref{def stratification} and \ref{def carnot group}) to play a significant role. Via such a stratification and the relative homogeneous group dilations, it is possible to introduce a concept of weights of forms. In other words, the stratified structure of the underlying Carnot group reflects on the space of smooth forms $\Omega^\ast$ and it can be expressed in terms of their weights.
%Moreover, the differential order of the exterior differential $d$ applied to an arbitrary form $\alpha$ coincides with the difference in weights between the form $d\alpha$ and $\alpha$. 
It is important to notice that by taking into consideration the whole de Rham complex $(\Omega^\ast,d)$, the extra structure given by the stratification is entirely missed, since all possible forms are considered. In this setting, the exterior differential $d$ can be described as a left-invariant differential operator acting on $\Omega^\ast$ whose components have various homogeneous orders for the Heisenberg calculus (see Lemma \ref{lemma split d} for a more precise formulation of this statement). This fact represents an obstacle towards the creation of  a theory of (hypo)elliptic operators on forms on Carnot groups. Indeed, the notion of hypoellipticity for left-invariant differential operators on nilpotent Lie groups relies on the Rockland condition of their principal symbols, i.e. the component of the differential operator of highest Heisenberg order \cite{fischer2016quantization}. Since in the case of the exterior differential the Rockland condition is strongly linked to the de Rham cohomology of the underlying manifold, not being able to take into account all the components of $d$ of different homogeneity is a problem \cite{dave2017graded}.

The Rumin complex $(E_0^\ast,d_c)$ was then introduced as a way to obviate this issue by constructing an appropriate subcomplex that would only include a limited amount of forms. The specific choice of which forms to use is then dictated by the the stratification of the Lie algebra. This construction was particularly successful in the case of Heisenberg groups $\mathbb{H}^{2n+1}$. In \cite{rumin1990complexe,rumin1994formes}, these intrinsic forms on $\mathbb{H}^{2n+1}$ were first defined via the following two differential ideals:
\begin{itemize}
    \item[-] $\mathcal{I}^\ast=\lbrace\gamma_1\wedge\theta+\gamma_2\wedge d\theta\mid \gamma_1,\gamma_2\in\Omega^\ast\rbrace$, the differential ideal generated by the contact form $\theta$, and
    \item[-] $\mathcal{J}^\ast=\lbrace \beta\in\Omega^\ast\mid\beta\wedge\theta=\beta\wedge d\theta=0\rbrace$ the annihilator of $\mathcal{I}^\ast$.
\end{itemize}
By simply using the properties of the Lefschetz operator $L$ given by the wedge product with the K\"ahler form $d\theta$ over $Ker\,\theta$, one can show that
\begin{align}\label{complementary quotients in heisenberg}
    \Omega^k/\mathcal{I}^k=0\;\text{ for }\;k\ge n+1\;\text{ and }\;\mathcal{J}^k=0\;\text{ for }\;k\le n\,.
\end{align}
so that the two subcomplexes $(\Omega^\ast/\mathcal{I}^\ast,d_Q)$ and $(\mathcal{J}^\ast,d_Q)$ are non-trivial in complementary degrees. Here $d_Q$ denotes the exterior differential $d$ that descends to the quotients $\Omega^\ast/\mathcal{I}^\ast$, and restricts to the subspaces $\mathcal{J}^\ast$ respectively. Again, by exploiting the properties of the Lefschetz operator $L$, it is also possible to construct a second order differential operator, which Rumin denotes as $D$, that links the two previous subcomplexes. We then obtain a new complex of intrinsic forms that computes the de Rham cohomology of the group $\mathbb{H}^{2n+1}$:
\begin{align}\label{intrinsic complex}
    \Omega^0/\mathcal{I}^0\xrightarrow[]{d_Q}\Omega^1/\mathcal{I}^1\xrightarrow[]{d_Q}\cdots\xrightarrow[]{d_Q}\Omega^n/\mathcal{I}^n\xrightarrow[]{D}\mathcal{J}^{n+1}\xrightarrow[]{d_Q}\mathcal{J}^{n+2}\xrightarrow[]{d_Q}\cdots\xrightarrow[]{d_Q}\mathcal{J}^{2n+1}\,.
\end{align}
Since these differential ideals create a filtration on smooth forms $0\subset\mathcal{J}^\ast\subset\mathcal{I}^\ast\subset\Omega^\ast$ which is stable under $d$, one can apply the machinery of spectral sequences to compute the cohomology of the de Rham complex $(\Omega^\ast,d)$. In \cite{julg1995complexe}, Julg studies precisely this construction over the Heisenberg group $\mathbb{H}^{2n+1}$ and the 7-dimensional quaternionic Heisenberg group. It should be noted that in the latter case, the filtration by differential ideals $\mathcal{J}^\ast_{l+1}\subset\mathcal{I}^\ast_{l}\subset\mathcal{J}^\ast_l$ is slightly more complicated, since there are three different contact 1-forms. In both groups, the $0^{th}$-page quotients of the form $\mathcal{J}_l^\ast/\mathcal{I}_l^\ast=E_0^{l,\ast}$ coincide with the spaces of intrinsic forms. Moreover, in the case of $\mathbb{H}^{2n+1}$, the non-trivial differentials $\partial_0$ acting on the $0^{th}$-page quotients $\Omega^k/\mathcal{I}^k$ for $k\le n$ and $\mathcal{J}^k$ for $k\ge n+1$ (see \eqref{complementary quotients in heisenberg}) coincide with the first order differential operators $d_Q$. Moreover, the only non-trivial differential $\partial_2$ on the second page quotients in degree $n$ coincides with the second order differential operator $D$, just like in the complex \eqref{intrinsic complex}. 

Even though on $\mathbb{H}^{2n+1}$ there is a clear correspondence between the Rumin differentials $d_Q$ and $D$, and the differentials $\partial_0$ and $\partial_2$ of the spectral sequence associated to the filtration $0\subset\mathcal{J}^\ast\subset\mathcal{I}^\ast\subset\Omega^\ast$, the same is not true already on the 7-dimensional quaternionic Heisenberg group. The biggest obstacle in this slightly more general case is the complication of having to deal with multiple non-trivial differentials $\partial_r$ on corresponding quotients over different pages. For example, in the case of the 7-dimensional quaternionic Heisenberg group, already on 1-forms one has that both the operators $\partial_0$ on $E_0^{0,1}=\mathcal{J}_0^1/\mathcal{I}_0^1$ and $\partial_2$ on $E_2^{0,1}$ are non-trivial.

In \cite{rumin2000around}, Rumin generalises the construction of this subcomplex of intrinsic forms from Heisenberg groups to arbitrary Carnot groups $\mathbb{G}$. The subspace of intrinsic Rumin forms $E_0^\ast$ is now defined in terms of $d_0$, the algebraic part of the exterior differential $d$, which coincides with the Chevalley–Eilenberg differential on forms. The notation $d_0$ is chosen to emphasise that this operator does not increase the weight of the form it acts on, i.e. given a differential form $\alpha$ of weight $p$, the weight of $d_0\alpha$ is still $p$ (assuming $d_0\alpha\neq 0$). Since $d_0\circ\, d_0=0$, it is possible to consider the cohomology of the complex $(\Omega^\ast,d_0)$. Once a metric on $\mathbb{G}$ is introduced, one can define the subspace $E_0^\ast=Ker\,d_0\cap\big(Im\,d_0\big)^\perp\subset\Omega^\ast$ of Rumin forms. After defining the homotopical equivalence $\Pi_E$ between $(\Omega^\ast,d)$ and a second subcomplex $(E^\ast,d)$, and the orthogonal projection $\Pi_{E_0}\colon \Omega^\ast\to E_0^\ast$, one obtains the exact subcomplex $(E_0^\ast,d_c=\Pi_{E_0}d\Pi_E)$ which is conjugated to $(E^\ast,d)$. The crucial point in this construction is the introduction of the operator $d_0^{-1}$, the inverse map of $d_0$, which can only be defined once we have a metric on $\mathbb{G}$.  

We have therefore obtained an exact subcomplex $(E_0^\ast,d_c)$ which is better adapted to the stratification of the Carnot group considered, and which computes the de Rham cohomology of the underlying manifold. Even though some connections between the Rumin complex and spectral sequences can easily be drawn, it is difficult to identify an abstract algebraic framework within which one should consider the Rumin complex in order to obtain a deeper understanding into the geometric intuition behind this construction.

By considering homogeneous weights on forms, it is possible to construct a decreasing filtration over $\Omega^\ast$ by the spaces $\mathcal{F}^p$ of forms of weight $\ge p$
\begin{align}\label{filtration by weights}
    \mathcal{F}^{T+1}=0\subset\mathcal{F}^T\subset\mathcal{F}^{T-1}\subset\cdots\subset\mathcal{F}^2\subset\mathcal{F}^1\subset\mathcal{F}^0=\Omega^\ast\,,
\end{align}
where $T$ is the homogeneous dimension of the Carnot group $\mathbb{G}$. It is easy to check that \eqref{filtration by weights} is also stable under the action of the exterior differential $d$, i.e. $d(\mathcal{F}^p)\subset\mathcal{F}^p$. It is then possible to consider its associated spectral sequence associated to compute the de Rham cohomology $(\Omega^\ast,d)$ of the underlying Carnot group. Firstly, the first page quotients $E_1^{p,\ast}$ coincide with the Rumin forms $E_0^\ast$ (once we consider them as quotients $Ker\,d_0/Im\,d_0$). Moreover, Rumin points out that in some very special cases (for example on Heisenberg groups) the Rumin differential $d_c$ coincides with a differential in the weight spectral sequence (we refer to \cite{rumin2005Palermo,tripaldi2013differential} for the precise statement).  

In this paper, we extend the study of the relationship between the Rumin differentials $d_c$ and the differentials in the weight spectral sequence in full generality. More precisely, in Theorem \ref{prop 4.7} we prove that the differential part of the operators $d_c$ coincides with the sum of the differentials $\partial_r$ that arise in the spectral sequence obtained from the filtration \eqref{filtration by weights}. We are able to do so by considering the weight filtration \eqref{filtration by weights} within the context of multicomplexes. In Section \ref{section 3}, we show how the de Rham complex $(\Omega^\ast,d)$ on an arbitrary Carnot group $\mathbb{G}$ is a multicomplex and the filtration given by its associated total complex is indeed the filtration by weights. We can then use the work of Livernet, Whitehouse and Ziegenhagen \cite{livernet2020spectral}, where they present the formulae of the differentials $\partial_r$ for $r\ge 1$ for this spectral sequence. By comparing the explicit expressions of the $\partial_r$ with the Rumin differentials $d_c$, we are able to show in Section \ref{section 4} that, morally, the Rumin differentials $d_c$ coincide with the sum $\sum_{r\ge 1}\partial_r$ of all the differentials $\partial_r$ that appear in the weight spectral sequence.   It is important to notice that in the case of the Rumin complex $(E_0^\ast,d_c)$, we are considering subspaces $E_0^\ast=Ker\,d_0\cap(Im\,d_0)^\perp$, and not quotients, and this is the central point where the introduction of a metric on $\mathbb{G}$ (or equivalently the definition of $d_0^{-1}$) becomes necessary. 

It is therefore possible to view the Rumin complex within the more abstract algebraic framework of spectral sequences. In this way, not only does it become clearer why the subcomplex $(E_0^\ast,d_c)$ computes the de Rham cohomology of the underlying manifold, but we also obtain a comprehensive bird's-eye view of the otherwise very complicated expression of the differentials $d_c$.
\section{The spectral sequence associated to a multicomplex}\label{section 2}
In this section we will present the construction of the spectral sequence associated to a multicomplex. In particular, we are interested in the explicit formulation of the differentials $\partial_r$ that appear at the $r^{th}$-step of this spectral sequence. This discussion follows the one produced in \cite{livernet2020spectral}, but with a slight modification in order to adapt the construction to the more mainstream approach adopted when discussing differential forms on Carnot groups. Indeed, in our paper we will be considering multicomplexes with differential maps $d_i\colon C\to C$ of bidegree $\vert d_i\vert=(i,1-i)$, instead of the first quadrant multicomplexes with differentials $d_i\colon C\to C$ of bidegree $\vert d_i\vert=(-i,i-1)$ that are more commonly studied in homology theory.

\begin{definition}
A multicomplex (also called a twisted chain complex) is a $(\mathbb{Z},\mathbb{Z})$-graded $k$-module $C$ equipped with maps $d_i\colon C\to C$ for $i\ge 0$ of bidegree $\vert d_i\vert=(i,1-i)$ such that
\begin{align}\label{sum=0}
    \sum_{i+j=n}d_id_j=0\;\text{ for all }\,n\ge 0\,.
\end{align}
For $C$ a multicomplex and $(a,b)\in\mathbb{Z}\times\mathbb{Z}$, we write $C_{a,b}$ for the $k$-module in bidegree $(a,b)$.
\end{definition}

If the maps $d_i\colon C\to C$ all vanish for $i\ge 1$, we obtain a chain complex with an additional grading. Instead in the case where $d_i=0$ for $i\ge 2$, we retrieve the usual notion of a bicomplex.

Given a multicomplex $C$, one could consider a priori different possible total complexes, however as already discussed in \cite{livernet2020spectral}, we will focus on the following choice for the total complex.
\begin{definition}
For a multicomplex $C$, its \textbf{associated total complex} $\mathrm{Tot}C$ is the chain complex with
\begin{align*}
    \big(\mathrm{Tot}C\big)_h=\Bigg(\prod_{\substack{a+b=h\\a\le 0}}C_{a,b}\Bigg)\oplus\Bigg(\bigoplus_{\substack{a+b=h\\a>0}}C_{a,b}\Bigg)=\Bigg(\bigoplus_{\substack{a+b=h\\b\le 0}}C_{a,b}\Bigg)\oplus\bigg(\prod_{\substack{a+b=h\\b>0}}C_{a,b}\Bigg)\,.
\end{align*}
The differential on $\mathrm{Tot}C$ is given for an arbitrary element $c\in(\mathrm{Tot}C)_h$ by:
\begin{align}\label{diffmulti}
    (dc)_a=\sum_{i\ge 0}d_i(c)_{a-i}\;.
\end{align}
Here $(c)_a$ denotes the projection of $c\in (\mathrm{Tot}C)_h$ to $C_{a,\ast}=\prod_{b,a+b=h}C_{a,b}$.
\end{definition}

In general, when working with $(\mathrm{Tot}C)_h$, it is not always possible to consider the direct product total complex $\prod_{a+b=h}C_{a,b}$ in degree $h$, as the formula \eqref{diffmulti} may involve infinite sums.
However in this paper, we will apply this construction of the total complex $\mathrm{Tot}C$ to the space of differential forms on Carnot groups. As we will see, in this case the range for $a$, which will be referred to as \textit{weights}, is finite and takes integer values between 0 and the Hausdorff dimension $Q$ of the Carnot group considered. Therefore, in this particular setting the associated total complex takes the simpler form
\begin{align}
    \big(\mathrm{Tot}C\big)_h=\bigoplus_{\substack{a+b=h\\a\ge 0}}C_{a,b}\;,\;\text{ with differential }(dc)_a=\sum_{i\ge 0}d_i(c)_{a-i}\,.
\end{align}

\begin{definition}
Given a multicomplex $C$ and its associated total complex $D\colon =\mathrm{Tot}C$, we can define for each $h$ the following subcomplexes
\begin{align}\label{filtration tot}
    \big({F}_pD\big)_h=\bigoplus_{\substack{a+b=h\\a\ge p}}C_{a,b}\,.
\end{align}
By definition, as the value of $p$ varies, the subcomplexes $F_pD$ form a filtration of $\mathrm{Tot}C$, that is $D$ is a filtered complex.
\end{definition}
\begin{remark}
It should be noted that
\begin{align*}
    F_pD=\bigoplus_{i=0}^{r-1}C_{p+i,\ast}\oplus F_{p+r}D\,,
\end{align*}
so that an arbitrary element $x\in F_pD$ can be written as
\begin{align}\label{uno}
    x=(x)_p+(x)_{p+1}+\cdots+(x)_{p+r-1}+u\,,
\end{align}
where $u\in F_{p+r}D$ and $(x)_{p+i}$ denotes the projection of $x$ to $C_{p+i,\ast}$.
\end{remark}

Let us now consider the spectral sequence associated to this filtered complex $F_pD$. For $r\ge 0$, the $r^{th}$-page of the spectral sequence is a bigraded module $E_r^{p,\ast}(D)$ with a map $\delta_r$ of bidegree $(r,1-r)$ for which $\delta_r\circ\delta_r=0$. Moreover, we have that the spaces $E_r^{p,\ast}$ can be expressed as the quotients
\begin{align*}
    E_r^{p,\ast}(D)\cong\mathcal{Z}_r^{p,\ast}(D)/\mathcal{B}_r^{p,\ast}(D)\,,
\end{align*}
where the $r$-cycles are given by
\begin{align*}
    \mathcal{Z}_r^{p,\ast}(D)\colon=F_pD\cap d^{-1}(F_{p+r}D)\,,
\end{align*}
and the $r$-boundaries are given by
\begin{align*}
    \begin{cases}
    \mathcal{B}_0^{p,\ast}(D)\colon =\mathcal{Z}_0^{p+1,\ast}(D)\;\text{ and }\\\mathcal{B}_r^{p,\ast}(D)\colon =\mathcal{Z}_{r-1}^{p+1,\ast}(D)+d\mathcal{Z}_{r-1}^{p-(r-1),\ast}(D)\;\text{ for }r\ge 1\,.
    \end{cases}
\end{align*}
Given an element $x\in \mathcal{Z}_r^{p,\ast}(D)$, we will denote by $[x]_r$ its image in $E_r^{p,\ast}(D)$, so that
\begin{align*}
    \delta_r([x]_r)=[dx]_r\;,\;\text{ for any }[x]_r\in E_r^{p,\ast}(D)\,.
\end{align*}
Expanding on the expressions for $\mathcal{Z}_r^{p,\ast}(D)$ and $\mathcal{B}_r^{p,\ast}(D)$, one can introduce the following definition.
\begin{definition}\label{def 2.6}
Let $x\in C_{p,\ast}$ and let $r\ge 1$. We define the graded submodules $Z_r^{p,\ast}$ and $B_r^{p,\ast}$ of $C_{p,\ast}$ as follows:
\begin{align}
    x\in Z_r^{p,\ast}\;\Longleftrightarrow&\;\text{ for }1\le j\le r-1\text{ there exists }z_{p+j}\in C_{p+j,\ast}\text{ such that}\notag \\&\,d_0x=0\text{ and }d_nx=\sum_{i=0}^{n-1}d_iz_{p+n-i}\text{ for all }1\le n\le r-1\label{star1}\\x\in B^{p,\ast}_r\;\Longleftrightarrow&\;\text{ for }0\le k\le r-1\,,\text{ there exists }c_{p-k}\in C_{p-k,\ast}\text{ such that}\notag\\&\begin{cases}
    x=\sum_{k=0}^{r-1}d_kc_{p-k}\;\text{ and }\\0=\sum_{k=1}^{r-1}d_{k-l}c_{p-k}\;\text{ for }1\le l\le r-1\,.
    \end{cases}\label{star2}
\end{align}
\end{definition}
\begin{remark}
In order to justify the first definition, let us consider an arbitrary element $x\in C_{p,\ast}$. Then there exists an element $\overline{x}\in F_pD$ such that $x=(\overline{x})_p$. Moreover, if we consider the expression \eqref{uno}, we will have
\begin{align}\label{overline x}
    \overline{x}=x+\xi_{p+1}+\xi_{p+2}+\cdots+\xi_{p+r-1}+\overline{u}
\end{align}
where $\xi_{p+j}=(\overline{x})_{p+j}$ for $1\le j\le r-1$ and $\overline{u}\in F_{p+r}D$. We then have that \begin{align*}
    d\overline{x}&=d\big(x+\xi_{p+1}+\xi_{p+2}+\cdots+\xi_{p+r-1}+\overline{u}\big)\\=&\underbrace{d_0x}_{C_{p,\ast}}+\underbrace{d_1x+d_0\xi_{p+1}}_{C_{p+1,\ast}}+\underbrace{d_2x+d_1\xi_{p+1}+d_0\xi_{p+2}}_{C_{p+2,\ast}}+\cdots+\\&+\underbrace{d_{r-1}x+d_{r-2}\xi_{p+1}+\cdots+d_0\xi_{p+r-1}}_{C_{p+r-1,\ast}}+\underbrace{d_rx+d_{r-1}\xi_{p+1}+\cdots+d_1\xi_{p+r-1}+d\overline{u}}_{F_{p+r}D}.
\end{align*}

Therefore $\overline{x}\in\mathcal{Z}_r^{p,\ast}$ for $r\ge 1$ if and only if $d\overline{x}\in F_{p+r}D$, that is
\begin{align*}
    &d_0x=0\\&d_1x+d_0\xi_{p+1}=0\;\rightarrow\;d_1x=d_0(-\xi_{p+1})\\&d_2x+d_1\xi_{p+1}+d_2\xi_{p+2}=0\;\rightarrow\;d_2x=d_0(-\xi_{p+2})+d_1(-\xi_{p+1})\\
    &\;\;\vdots\\
    &d_{r-1}x+d_{r-2}\xi_{p+1}+\cdots+d_0\xi_{p+r-1}=0\;\rightarrow\;d_{r-1}x=\sum_{i=0}^{r-2}d_i(-\xi_{p+r-1-i})\,.
\end{align*}
By imposing $z_{p+j}=-\xi_{p+j}\in C_{p+j,\ast}$ we then recover the expression in \eqref{star1}.

On the other hand, given an element $c\in\mathcal{Z}_{r-1}^{p-(r-1),\ast}(D)$ for $r\ge 1$, we have that
\begin{align*}
    c=(c)_{p-r+1}+(c)_{p-r+2}+\cdots+(c)_{p-1}+(c)_{p}+v
\end{align*}
with $v\in F_{p+1}D$ and $c_{p-k}\colon=(c)_{p-k}\in C_{p-k,\ast}$ for $0\le k\le r-1$, and $dc\in F_pD$. Therefore,
\begin{align*}
    dc=&\underbrace{d_0c_{p-r+1}}_{C_{p-r+1,\ast}}+\underbrace{d_1c_{p-r+1}+d_0c_{p-r+2}}_{C_{p-r+2,\ast}}+\cdots+\underbrace{d_{r-2}c_{p-r+1}+d_{r-3}c_{p-r+2}+\cdots+d_0c_{p-1}}_{C_{p-1,\ast}}+\\&+\underbrace{d_{r-1}c_{p-r+1}+d_{r-2}c_{p-r+2}+\cdots+ d_1c_{p-1}+d_0c_p}_{C_{p,\ast}}+\\&+\underbrace{d_rc_{p-r+1}+d_{r-1}c_{p-r+2}+\cdots+d_2c_{p-1}+d_1c_p+dv}_{F_{p+1}D}\,,
\end{align*}
and since an element $e\in\mathcal{Z}_{r-1}^{p+1,\ast}(D)$ is an element $e\in F_{p+1}D$, we have that $\overline{x}$ given in \eqref{overline x} will belong to $\mathcal{B}_r^{p,\ast}(D)=\mathcal{Z}_{r-1}^{p+1,\ast}(D)+d\mathcal{Z}_{r-1}^{p-(r-1),\ast}(D)$ if
\begin{align*}
    &d_0c_{p-r+1}=0\\
    &d_1c_{p-r+1}+d_0c_{p-r+2}=0\\
    &\;\;\vdots\\
    &d_{r-2}c_{p-r+1}+d_{r-3}c_{p-r+2}+\cdots+d_0c_{p-1}=0\\
    &d_{r-1}c_{p-r+1}+d_{r-2}c_{p-r+2}+\cdots+d_1c_{p-1}+d_0c_p=x
\end{align*}
which are exactly the expressions in \eqref{star2}.
\end{remark}

\begin{proposition}
For $r\ge 1$ and for all $p\in\mathbb{Z}$, we have $B_r^{p,\ast}\subseteq Z_{r}^{p,\ast}$.
\end{proposition}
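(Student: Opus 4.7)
The plan is to take an arbitrary $x\in B_r^{p,\ast}$ with its defining witnesses $c_{p-k}\in C_{p-k,\ast}$ satisfying \eqref{star2}, and to construct explicit witnesses $z_{p+j}\in C_{p+j,\ast}$ certifying that $x\in Z_r^{p,\ast}$. Motivated by the symmetry between \eqref{star1} and \eqref{star2}, the natural candidates are
\[
z_{p+j} := -\sum_{k=0}^{r-1} d_{j+k}\,c_{p-k}, \qquad 1\le j\le r-1.
\]
Verifying the two conditions in \eqref{star1} for these $z_{p+j}$ then reduces to routine manipulations with the multicomplex identity \eqref{sum=0}.

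First I would check $d_0 x=0$. Writing $x=\sum_{k=0}^{r-1}d_k c_{p-k}$ and applying $d_0$, I use \eqref{sum=0} with total index $k$ to replace $d_0 d_k$ by $-\sum_{i=1}^{k}d_i d_{k-i}$. Swapping the order of summation and setting $l=i$, the result becomes $-\sum_{l=1}^{r-1}d_l\bigl(\sum_{k=l}^{r-1}d_{k-l}c_{p-k}\bigr)$, whose inner sums vanish by the $l$-th constraint in \eqref{star2}.

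Next, for $1\le n\le r-1$, I would verify $d_n x=\sum_{i=0}^{n-1}d_i z_{p+n-i}$. Moving everything to one side and using the definition of $z_{p+n-i}$, this reduces to showing
\[
\sum_{k=0}^{r-1}\Bigl[\sum_{i=0}^{n}d_i\,d_{n+k-i}\Bigr]c_{p-k}=0.
\]
The multicomplex identity \eqref{sum=0} with total index $n+k$ lets me replace $\sum_{i=0}^{n}d_i d_{n+k-i}$ by $-\sum_{i=n+1}^{n+k}d_i d_{n+k-i}$ (the $k=0$ term vanishes). Swapping summations and substituting $l=i-n$, $j=n+k-i$, the whole expression collapses to $-\sum_{l=1}^{r-1}d_{n+l}\bigl(\sum_{j=0}^{r-1-l}d_j c_{p-l-j}\bigr)$, and the inner sum is again precisely the $l$-th constraint from \eqref{star2}, hence zero.

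The only real obstacle is bookkeeping: guessing the correct form of $z_{p+j}$ and carefully swapping summation orders while keeping index ranges valid. Once the ansatz is in place, the algebraic input is just the Leibniz-type identity of the multicomplex together with the vanishing relations built into the definition of $B_r^{p,\ast}$, and the verification is a direct computation.
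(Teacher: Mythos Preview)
Your proof is correct and follows essentially the same approach as the paper: you choose the identical witnesses $z_{p+j}=-\sum_{k=0}^{r-1}d_{j+k}c_{p-k}$ and verify \eqref{star1} via the multicomplex identity \eqref{sum=0} together with the vanishing constraints in \eqref{star2}. The only cosmetic difference is that the paper organizes the second verification by computing $d_0 z_{p+n}$ and rearranging, whereas you compute $d_n x-\sum_{i=0}^{n-1}d_i z_{p+n-i}$ directly; the underlying manipulation is the same.
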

\begin{proof}
Let $x\in B_r^{p,\ast}$ with $c_{p-k}\in C_{p-k,\ast}$ for $0\le k\le r-1$ satisfying equations \eqref{star2}. Define 
\begin{align*}
    z_{p+j}\colon=-\sum_{i=0}^{r-1} d_{j+i}c_{p-i}\in C_{p+j,\ast}\;\text{ for }1\le j\le r-1\,.
\end{align*}
Then 
\begin{align*}
    d_0x=d_0\bigg(\sum_{k=0}^{r-1}d_kc_{p-k}\bigg)=\sum_{n=0}^{r-1}\bigg(\sum_{i+j=n}d_id_j\bigg)c_{p-n}-\sum_{l=1}^{r-1}d_l\bigg(\sum_{k=l}^{r-1}d_{k-l}c_{p-l}\bigg)=0\,.
\end{align*}
Moreover, for an arbitrary $n\in\mathbb{N}$, we have
\begin{align*}
    d_0z_{p+n}=&-d_0\bigg(\sum_{i=0}^{r-1}d_{n+i}c_{p-i}\bigg)=-\sum_{l=0}^{r-1}\bigg(\sum_{i+j=l+n}d_id_j\bigg)c_{p-l}+\sum_{l=1}^{n-1}d_l\bigg(\sum_{k=0}^{r-1}d_{n-l+k}c_{p-k}\bigg)+\\&+\sum_{l=0}^{r-1}d_{l+n}\bigg(\sum_{i=0}^{r-1-l}d_ic_{p-l-i}\bigg)\\=&0+\sum_{l=1}^{n-1}d_l(-z_{p+n-l})+d_n\bigg(\sum_{i=0}^{r-1}d_ic_{p-i}\bigg)+\sum_{l=1}^{r-1}d_{l+n}\underbrace{\bigg(\sum_{k=l}^{r-1}d_{k-l}c_{p-k}\bigg)}_{=0\text{ by }\eqref{star2}}\\=&-\sum_{l=1}^{n-1}d_lz_{p+n-l}+d_nx
\end{align*}
so that \eqref{star1} is satisfied and hence $x\in Z_r^{p,\ast}$.
\end{proof}
\begin{proposition}\label{propo 2.8}
The map 
\begin{align*}
    \psi\colon\mathcal{Z}_r^{p,\ast}(D)/\mathcal{B}_r^{p,\ast}(D)\to Z_r^{p,\ast}/B_r^{p,\ast}
\end{align*}
sending $[x]_r$ to the class $[(x)_p]$ is well defined and it is an isomorphism.
\end{proposition}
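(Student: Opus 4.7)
The plan is to prove in turn that $\psi$ is well defined, surjective, and injective; in each of these three steps the explicit component-by-component expansion already carried out in the Remark preceding the statement does essentially all the work, leaving only bookkeeping.

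For well-definedness, if $\overline{x}\in\mathcal{Z}_r^{p,\ast}(D)$ is written as in \eqref{overline x}, then the condition $d\overline{x}\in F_{p+r}D$ is, componentwise, exactly the system \eqref{star1} with $z_{p+j}:=-\xi_{p+j}$, so $(\overline{x})_p\in Z_r^{p,\ast}$. If instead $\overline{x}=e+dc\in\mathcal{B}_r^{p,\ast}(D)$ with $e\in F_{p+1}D$ and $c\in\mathcal{Z}_{r-1}^{p-(r-1),\ast}(D)$, then $(e)_p=0$, hence $(\overline{x})_p=(dc)_p$; the Remark's expansion of $dc$ shows that this equals $\sum_{k=0}^{r-1}d_kc_{p-k}$ subject to the constraints \eqref{star2}, so $(\overline{x})_p\in B_r^{p,\ast}$.

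For surjectivity I would, given $x\in Z_r^{p,\ast}$, use the elements $z_{p+j}$ supplied by \eqref{star1} to set $\overline{x}:=x-z_{p+1}-\cdots-z_{p+r-1}\in F_pD$. Running the Remark's calculation in reverse, the equations \eqref{star1} kill every component of $d\overline{x}$ in weights $p,\ldots,p+r-1$, so $\overline{x}\in\mathcal{Z}_r^{p,\ast}(D)$ and $(\overline{x})_p=x$. For injectivity, given $\overline{x}\in\mathcal{Z}_r^{p,\ast}(D)$ with $(\overline{x})_p=x\in B_r^{p,\ast}$ witnessed by $c_{p-r+1},\ldots,c_p$, I would assemble $c:=\sum_{k=0}^{r-1}c_{p-k}\in F_{p-(r-1)}D$. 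The $r-1$ vanishing equations in \eqref{star2} annihilate all weights $p-r+1,\ldots,p-1$ of $dc$, while the remaining one yields $(dc)_p=x$; therefore $dc\in F_pD$, so $c\in\mathcal{Z}_{r-1}^{p-(r-1),\ast}(D)$. Then $\overline{x}-dc$ lies in $F_{p+1}D$ since its weight-$p$ part vanishes, and $d(\overline{x}-dc)=d\overline{x}\in F_{p+r}D\subseteq F_{(p+1)+(r-1)}D$ places it in $\mathcal{Z}_{r-1}^{p+1,\ast}(D)$, whence $\overline{x}\in\mathcal{B}_r^{p,\ast}(D)$.

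I do not expect a conceptual difficulty: the map $\psi$ is essentially the identification of cycles and boundaries in $\mathrm{Tot}C$ with their leading-weight components, and both \eqref{star1} and \eqref{star2} were designed precisely to make this identification tautological. The one spot that deserves care is the injectivity step, where all $r-1$ vanishing equations in \eqref{star2} must be used in order to upgrade $c$ from an element of $F_{p-(r-1)}D$ to an element of the full $\mathcal{Z}_{r-1}^{p-(r-1),\ast}(D)$; without them $dc$ would only be guaranteed to lie in $F_{p-(r-1)}D$ rather than in $F_pD$, and the argument would collapse.
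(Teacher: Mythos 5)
Your proposal is correct and follows essentially the same route as the paper's proof: translate $d\overline{x}\in F_{p+r}D$ into \eqref{star1} for well-definedness, build $\overline{x}=x-z_{p+1}-\cdots-z_{p+r-1}$ for surjectivity, and assemble $c=\sum_{k=0}^{r-1}c_{p-k}\in\mathcal{Z}_{r-1}^{p-(r-1),\ast}(D)$ so that $\overline{x}-dc\in\mathcal{Z}_{r-1}^{p+1,\ast}(D)$ for injectivity, which is exactly the paper's kernel computation for $\widehat{\psi}$. The only difference is cosmetic: you argue directly on the quotient map and write out the surjectivity step that the paper dismisses as ``a similar argument.''
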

\begin{proof}
Let us first consider the map
\begin{align*}
    \widehat{\psi}\colon\mathcal{Z}_r^{p,\ast}(D&)\to Z_r^{p,\ast}/B_r^{p,\ast}\\
    x&\longmapsto\widehat{\psi}(x)=[(x)_p]\,.
\end{align*}
Given an element $x\in\mathcal{Z}_r^{p,\ast}=F_pD\cap d^{-1}(F_{p+r}D)$, we have that in particular $dx\in F_{p+r}D$, which means that $(dx)_n=0$ for $0\le n\le r-1$. Therefore
\begin{align*}
    d_0(x)_p=0\;\text{ and }\;(dx)_{p+n}=d_n(x)_p+\sum_{i=0}^{n-1}d_i(x)_{p+n-i}=0\;\text{ for }1\le n\le r-1\,.
\end{align*}
It is then sufficient to take $z_{p+n-i}=-x_{p+n-i}$ to see that they satisfy \eqref{star1} in Definition \ref{def 2.6} and hence $(x)_p\in Z_r^{p,\ast}$. By a similar argument, one can show that $\widehat{\psi}$ is surjective.

Let us now show that $\widehat{\psi}$ is injective. Given $x=(x)_p+w\in Ker\,\widehat{\psi}$ with $w\in F_{p+1}D$, then $(x)_p\in B_r^{p,\ast}$ and hence by \eqref{star2} there exist $c_{p-k}\in C_{p-k,\ast}$ for $0\le k\le r-1$ such that
\begin{align*}
    \begin{cases}
    (x)_p=\sum_{k=0}^{r-1}d_kc_{p-k}\;\text{ and }\\
    0=\sum_{k=l}^{r-1}d_{k-l}c_{p-k}\;\text{ for }1\le l\le r-1\,.
    \end{cases}
\end{align*}
Let $c=\sum_{k=0}^{r-1}c_{p-k}\in F_{p-r+1}D$, then 
\begin{align*}
    (dc)_p=\sum_{k=0}^{r-1}d_kc_{p-k}=(x)_p\;\text{ and }\;(dc)_{p-l}=\sum_{k=l}^{r-1}d_{k-l}c_{p-k}=0\text{ for all }1\le l\le r-1\,,
\end{align*}
which implies that
\begin{align*}
    c\in F_{p-r+1}D\text{ and }dc\in F_{p}D\Longrightarrow c\in \mathcal{Z}_{r-1}^{p-r+1,\ast}D\,.
\end{align*}
Moreover, $(x)_p-dc\in F_{p+1}D$ and $x=dc+\rho$, where we take $\rho=(x)_p-dc+w\in F_{p+1}D$. Since $d^2c=0$, we have that $dx=d\rho\in F_{p+r}D$, so that $\rho\in\mathcal{Z}_{r-1}^{p+1,\ast}(D)$ and hence $Ker\,\widehat{\psi}\subseteq\mathcal{B}_r^{p,\ast}(D)$.

Conversely, if $x\in\mathcal{B}_r^{p,\ast}(D)$, then $x=\rho+dc$ for some $\rho\in\mathcal{Z}_{r-1}^{p+1,\ast}(D)$ and some $c\in\mathcal{Z}_{r-1}^{p-r+1,\ast}(D)$, so that $\rho\in F_{p+1}D$ and $dc\in F_pD$. Therefore, $(x)_p=(dc)_p$ and $(dc)_k=0$ for all $k<p$. This then implies that $(x)_p\in B_r^{p,\ast}$ and $\mathcal{B}_r^{p,\ast}(D)\subseteq Ker\,\widehat{\psi}$.
\end{proof}
\begin{theorem}
Under the isomorphism 
\begin{align*}
    \psi\colon \mathcal{Z}_r^{p,\ast}(D)/\mathcal{B}_r^{p,\ast}(D)\to Z_r^{p,\ast}/B_r^{p,\ast}
\end{align*}
studied in Proposition \ref{propo 2.8}, the $r^{th}$-differential of the spectral sequence corresponds to the following map
\begin{align*}
    \partial_r\colon Z_r^{p,\ast}/B_r^{p,\ast}\to Z_r^{p+r,\ast}/B_r^{p+r,\ast}\\
    \partial_r([x])=\bigg[d_r-\sum_{i=1}^{r-1}d_iz_{p+r-i}\bigg]
\end{align*}
where $x\in Z_r^{p,\ast}$, and the elements $z_{p+j}\in C_{p+j,\ast}$ satisfy \eqref{star1} for all $1\le j\le r-1$.
\end{theorem}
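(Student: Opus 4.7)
The plan is to compute the differential $\delta_r$ on $\mathcal Z_r^{p,\ast}(D)/\mathcal B_r^{p,\ast}(D)$ using a concrete lift of $x\in Z_r^{p,\ast}$ and then transport the answer across the isomorphism $\psi$ of Proposition \ref{propo 2.8}. Given $x\in Z_r^{p,\ast}$ with witnesses $z_{p+1},\ldots,z_{p+r-1}$ satisfying the cascade in \eqref{star1}, I will use the lift
$$
\overline x \;:=\; x - z_{p+1} - z_{p+2} - \cdots - z_{p+r-1},
$$
which is exactly the element in \eqref{overline x} with $\xi_{p+j}=-z_{p+j}$ and $\overline u=0$. The computation carried out in the Remark following Definition \ref{def 2.6} shows, stratum by stratum, that the vanishing of $(d\overline x)_{p+n}$ for $0\le n\le r-1$ is precisely the content of \eqref{star1}. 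Hence $d\overline x\in F_{p+r}D$, so $\overline x\in\mathcal Z_r^{p,\ast}(D)$, and since $(\overline x)_p=x$ we get $\psi([\overline x]_r)=[x]$.

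Next, I read off the weight-$(p+r)$ stratum of $d\overline x$ from that same expansion: the contributions come from $d_r(\overline x)_p$ together with the $r-1$ cross terms $d_{r-i}\xi_{p+i}=-d_{r-i}z_{p+i}$ for $i=1,\dots,r-1$, giving
$$
(d\overline x)_{p+r}\;=\;d_r\,x \;-\; \sum_{i=1}^{r-1} d_i\,z_{p+r-i}
$$
after re-indexing. By definition $\delta_r([\overline x]_r)=[d\overline x]_r$ in $\mathcal Z_r^{p+r,\ast}(D)/\mathcal B_r^{p+r,\ast}(D)$, and applying $\psi$ in bidegree $(p+r,\ast)$ — which sends a class to the projection onto $C_{p+r,\ast}$ — yields
$$
\partial_r([x]) \;=\; \psi\bigl(\delta_r([\overline x]_r)\bigr) \;=\; \bigl[(d\overline x)_{p+r}\bigr] \;=\; \Bigl[\,d_r\,x \;-\; \sum_{i=1}^{r-1} d_i\,z_{p+r-i}\,\Bigr]
$$
in $Z_r^{p+r,\ast}/B_r^{p+r,\ast}$, which is the asserted formula.

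The step that warrants the most care is checking that the right-hand side depends only on the class $[x]$ and not on the particular witnesses $z_{p+j}$. For a second admissible choice $z'_{p+j}$, subtracting the two instances of \eqref{star1} shows that $w_{p+j}:=z_{p+j}-z'_{p+j}$ satisfies $\sum_{i=0}^{n-1} d_i\, w_{p+n-i}=0$ for $1\le n\le r-1$. Taking $c_{p+r-k}:=w_{p+r-k}$ for $1\le k\le r-1$ and $c_{p+r}:=0$, the substitution $n=r-l$ turns these relations into exactly the vanishing conditions of \eqref{star2} witnessing that $\sum_{i=1}^{r-1} d_i\, w_{p+r-i}\in B_r^{p+r,\ast}$. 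This is the main combinatorial step — aligning the cascade \eqref{star1} read downward from weight $p$ with the cascade \eqref{star2} read upward toward weight $p+r$ — but once our formula has been identified with $\psi\circ\delta_r\circ\psi^{-1}$, independence from the chosen representative of $[x]$ follows automatically from the well-definedness of $\delta_r$ on the spectral sequence.
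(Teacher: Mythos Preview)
Your proof is correct and follows essentially the same route as the paper: you take the same lift $\overline{x}=x-z_{p+1}-\cdots-z_{p+r-1}$, verify $\overline{x}\in\mathcal{Z}_r^{p,\ast}(D)$ via the computation in the Remark after Definition~\ref{def 2.6}, and then read off $(d\overline{x})_{p+r}$ and transport through $\psi$. Your extra paragraph checking independence from the choice of witnesses $z_{p+j}$ is a correct direct verification (your alignment of \eqref{star1} with \eqref{star2} via $c_{p+r-k}=w_{p+r-k}$, $c_{p+r}=0$ works), but as you yourself note it is redundant once the formula is identified with $\psi\circ\delta_r\circ\psi^{-1}$, which is how the paper handles it.
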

\begin{proof}
Given the elements $z_{p+j}\in C_{p+j,\ast}$ satisfy \eqref{star1}, we have that $x-z_{p+1}-z_{p+2}-\cdots-z_{p+r-1}\in F_pD$ and $d\big(x-z_{p+1}-z_{p+2}-\cdots-z_{p+r-1}\big)\in F_{p+r}D$, so that $[x-z_{p+1}-\cdots-z_{p+r-1}]_r\in\mathcal{Z}_r^{p,\ast}(D)/\mathcal{B}_r^{p,\ast}(D)$.

By Proposition \ref{propo 2.8}, we have that $\psi([x-z_{p+1}-\cdots-z_{p+r-1}]_r)=[x]$, so that
\begin{align*}
    \partial_r([x])=&\psi\circ\delta_r([x-z_{p+1}-\cdots-z_{p+r-1}]_r)=\psi([d(x-z_{p+1}-\cdots-z_{p+r-1})]_r)\\=&\big[\big(d(x-z_{p+1}-\cdots-z_{p+r-1})\big)_{p+r}\big]=\bigg[d_rx-\sum_{i=1}^{r-1}d_iz_{p+r-i}\bigg]
\end{align*}
\end{proof}

\section{Carnot groups and their multicomplexes}\label{section 3}

In this section we introduce the concept of a stratified group, or Carnot group, and present the main properties induced by its stratification, namely the concept of weights of forms and the decomposition of the exterior differential $d$ in terms of these weights. Finally, we show how these properties can be used to prove that the de Rham complex $(\Omega^\ast,d)$ is a multicomplex.
\begin{definition}\textbf{The spaces of $h$-vectors and $h$-covectors}

Given an $n$-dimensional Lie algebra $\mathfrak{g}$ with basis $\lbrace X_1,\ldots,X_n\rbrace$, we will denote its dual space by $\Lambda^1\mathfrak{g}^\ast$. This is the vector space of all linear functionals on the elements of $\mathfrak{g}$ and its also referred to as the space of 1-covectors. 

One can then consider the dual basis $\lbrace \theta_1,\ldots,\theta_n\rbrace$ for which $\langle \theta_i\mid X_j\rangle=\delta_{ij}$ for any $i,j=1,\ldots,n$. Here $\langle\cdot\mid\cdot\rangle$ denotes the duality product, that is the action of the linear functional $\theta_i\in\Lambda^1\mathfrak{g}^\ast$ on the element $X_j\in\mathfrak{g}$.  Moreover, one can also introduce an inner product $\langle\cdot,\cdot\rangle$ on $\Lambda^1\mathfrak{g}^\ast$ such that this dual basis $\lbrace\theta_1,\ldots,\theta_n\rbrace$ is orthonormal.
We will denote the exterior algebras of $\mathfrak{g}$ and $\Lambda^1\mathfrak{g}^\ast$ as
\begin{align*}
    \Lambda^\ast\mathfrak{g}=\bigoplus_{h=0}^n\Lambda^h\mathfrak{g}\;\text{ and }\;\Lambda^\ast\mathfrak{g}^\ast=\bigoplus_{h=0}^n\Lambda^h\mathfrak{g}^\ast\,,
\end{align*}
where for $1\le h\le n$ 
\begin{align*}
    &\Lambda^h\mathfrak{g}=span_\mathbb{R}\lbrace X_{i_1}\wedge\cdots\wedge X_{i_h}\mid 1\le i_1<\cdots<i_h\le n\rbrace\;\text{ and }\\&\Lambda^h\mathfrak{g}^\ast=span_\mathbb{R}\lbrace \theta_{j_1}\wedge\cdots\wedge \theta_{j_h}\mid 1\le j_1<\cdots<j_h\le n\rbrace\,.
\end{align*}
are the spaces of $h$-vectors and $h$-covectors respectively. In particular, we stress that the inner product defined on $\Lambda^1\mathfrak{g}^\ast$ extends canonically to each $\Lambda^h\mathfrak{g}^\ast$, making their bases orthonormal too.

Finally, in order to relate the spaces $\Lambda^h\mathfrak{g}$ and $\Lambda^h\mathfrak{g}^\ast$, we will also consider the following maps
\begin{align*}
    &^\ast\colon\Lambda^h\mathfrak{g}\to\Lambda^h\mathfrak{g}^\ast\;\text{ such that }\;\langle X^\ast\mid Y\rangle=\langle X,Y\rangle\;\;\forall\;Y\in\Lambda^h\mathfrak{g}\;\text{ and }\\
    &^\ast\colon\Lambda^h\mathfrak{g}^\ast\to\Lambda^h\mathfrak{g}\;\text{ such that }\;\langle \theta^\ast\mid \alpha\rangle=\langle \theta,\alpha\rangle\;\;\forall\;\alpha\in\Lambda^h\mathfrak{g}^\ast\,.
\end{align*}
In both cases, we will refer to $X^\ast\in\Lambda^h\mathfrak{g}^\ast$ and $\theta^\ast\in\Lambda^h\mathfrak{g}^\ast$ as the dual of $X\in\Lambda^h\mathfrak{g}$ and $\theta\in\Lambda^h\mathfrak{g}^\ast$ respectively.
\end{definition}

\begin{definition}\textbf{Grading}

We say that a Lie algebra $\mathfrak{g}$ is graded when it admits a vector space decomposition
\begin{align*}
    \mathfrak{g}=\bigoplus_{j=1}^\infty V_j\text{ such that }[V_i,V_j]\subset V_{i+j}
\end{align*}
and all but finitely many of the subspaces $V_j$s are $\lbrace 0\rbrace$.
\end{definition}
\begin{definition}\label{def stratification}\textbf{Stratification}

We say that a Lie algebra $\mathfrak{g}$ is stratified when $\mathfrak{g}$ admits a grading in which the first layer $V_1$ generates the whole Lie algebra.  In other words, every
element of $\mathfrak{g}$ can be written as a linear combination of iterated Lie brackets
of various elements of $V_1$. In this case, a stratification (or Carnot grading) of step $s$ for the Lie algebra $\mathfrak{g}$ can be expressed as
\begin{align}\label{stratification}
    \mathfrak{g}=V_1\oplus\cdots\oplus V_s\;,\;[V_1,V_i]=V_{i+1}\;,\;V_{s}\neq 0\;\text{ and }V_k=0\text{ for }k>s\,.
\end{align}
\end{definition}

\begin{definition}\label{def carnot group}\textbf{Carnot groups}

A stratified or Carnot group $\mathbb{G}$ of nilpotency step $s$ is a connected, simply-connected Lie group whose Lie algebra $\mathfrak{g}$ of dimension $n$ is equipped with a step $s$ stratification. 
\end{definition}

It should be stressed that in the case of a grading on $\mathfrak{g}$, one can define homogeneous dilations $\delta_\lambda\colon\mathbb{G}\to\mathbb{G}$ on the underlying Lie group for any $\lambda>0$. These dilations represent a group homomorphism and can be used to introduce a concept of homogeneous weights on the space of differential forms. Even though such homogeneous weights can be defined for any grading, since we will be exclusively working on Carnot groups $\mathbb{G}$, we will only be considering the homogeneous weights that originate from the stratification of their Lie algebra $\mathfrak{g}$.

\begin{definition}\textbf{Weights of covectors}

Given $\theta\in\Lambda^1\mathfrak{g}^\ast$, we say that $\theta$ has pure weight $p$ if $\theta^\ast\in V_p$, where $V_p$ denotes the $p^{th}$-layer of the stratification \eqref{stratification} on $\mathfrak{g}$. If this is the case, we will write $w(\theta)=p$.

In general, given an $h$-covector $\xi\in\Lambda^h\mathfrak{g}^\ast$, we say that $\xi$ as pure weight $p$ and write $w(\xi)=p$, if $\xi$ can be expressed as a linear combination of covectors $\theta_{i_1}\wedge\cdots\wedge\theta_{i_h}$ such that $w(\theta_{i_1})+\cdots+w(\theta_{i_h})=p$.
\end{definition}

\begin{remark}\label{remark basis}
In the case of a Carnot group, without loss of generality, one can consider an orthonormal basis $\lbrace X_1,\ldots,X_n\rbrace$ which is adapted to the stratification \eqref{stratification}, that is
\begin{align*}
    V_1=span_{\mathbb{R}}\lbrace X_{1},\ldots,X_{m_1}\rbrace\;\text{ and }\;V_i=span_{\mathbb{R}}\lbrace X_{m_{i-1}+1},\ldots,X_{m_i}\rbrace\text{ for }1\le i\le s\,.
\end{align*}
In particular, this implies that that the subspaces $V_i$ and $V_j$ are orthogonal whenever $i\neq j$.
Moreover, its dual basis $\lbrace\theta_1,\ldots,\theta_n\rbrace$ with $\langle\theta_i\mid X_j\rangle=\delta_{ij}$, will also be an orthonormal basis of $\Lambda^1\mathfrak{g}^\ast$ which reflects the stratification in terms of weights. In fact, for any $j=1,\ldots,m_1$ we have $w(\theta_j)=1$, and given $1\le i\le s$ we have that $w(\theta_k)=i$ for any $k=m_{i-1}+1,\ldots,m_i$.
\end{remark}

\begin{proposition}
Let us consider $\xi,\eta\in\Lambda^h\mathfrak{g}^\ast$ two arbitrary $h$-covectors. If $w(\xi)\neq w(\eta)$, then they are orthogonal, that is $\langle\xi,\eta\rangle=0$.
\end{proposition}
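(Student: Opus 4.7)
The plan is to reduce everything to the orthonormal basis of $\Lambda^h\mathfrak{g}^\ast$ adapted to the stratification, described in Remark \ref{remark basis}. The inner product on $\Lambda^1\mathfrak{g}^\ast$ extends canonically to $\Lambda^h\mathfrak{g}^\ast$ in such a way that the induced basis $\{\theta_{j_1}\wedge\cdots\wedge\theta_{j_h}\mid 1\le j_1<\cdots<j_h\le n\}$ is orthonormal. This is the key fact on which everything else will rest.

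Next, I would observe that each of these basis $h$-covectors has a well-defined pure weight, namely $w(\theta_{j_1}\wedge\cdots\wedge\theta_{j_h})=w(\theta_{j_1})+\cdots+w(\theta_{j_h})$, since each $\theta_{j_k}$ lies in a single stratum by Remark \ref{remark basis}. For a fixed integer $p$, let $\mathcal{B}_p$ denote the collection of basis $h$-covectors whose weights sum to $p$; by the definition of pure weight, $\xi$ lies in the span of $\mathcal{B}_{w(\xi)}$ and $\eta$ lies in the span of $\mathcal{B}_{w(\eta)}$.

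Since $w(\xi)\neq w(\eta)$, the sets $\mathcal{B}_{w(\xi)}$ and $\mathcal{B}_{w(\eta)}$ are disjoint. Writing
\begin{align*}
    \xi=\sum_{\theta_I\in\mathcal{B}_{w(\xi)}}a_I\,\theta_I\,,\qquad \eta=\sum_{\theta_J\in\mathcal{B}_{w(\eta)}}b_J\,\theta_J\,,
\end{align*}
and expanding $\langle\xi,\eta\rangle$ bilinearly, each term is of the form $a_I b_J\langle\theta_I,\theta_J\rangle$ with $\theta_I\neq\theta_J$, since $\theta_I$ and $\theta_J$ belong to disjoint subsets of the orthonormal basis. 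By orthonormality each such term vanishes, so $\langle\xi,\eta\rangle=0$.

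I expect no serious obstacle in this argument: the only point that deserves care is verifying that the canonical extension of $\langle\cdot,\cdot\rangle$ to $\Lambda^h\mathfrak{g}^\ast$ does in fact make the basis $\{\theta_{j_1}\wedge\cdots\wedge\theta_{j_h}\}$ orthonormal, which is standard but should be cited explicitly. Once that is in place the proof is an immediate consequence of the fact that weight is additive on wedge products of adapted basis $1$-covectors.
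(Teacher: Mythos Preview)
Your proof is correct and follows essentially the same approach as the paper's: both arguments rest on the fact that the basis $\{\theta_{j_1}\wedge\cdots\wedge\theta_{j_h}\}$ of $\Lambda^h\mathfrak{g}^\ast$ is orthonormal and consists of covectors of pure weight, so that covectors of different pure weights lie in the spans of disjoint subsets of this basis. The paper treats the case $h=1$ separately and then, for $h>1$, reduces without loss of generality to single basis $h$-covectors and observes that unequal weights force the two multi-indices to differ; your version packages the same observation more directly via the partition into the sets $\mathcal{B}_p$, but the content is the same.
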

\begin{proof}
Let us first consider the case of $h=1$. Given $\xi,\eta\in\Lambda^1\mathfrak{g}^\ast$ such that $w(\xi)=i$ and $w(\eta)=j$ with $i\neq j$, then by definition we have $\xi^\ast\in V_i$ and $\eta^\ast\in V_j$. Therefore, $\langle\xi,\eta\rangle=\langle\xi^\ast\mid\eta\rangle=0$, and indeed, as already mentioned in the previous remark, the subspaces $V_i$ and $V_j$ are orthogonal.

If $h>1$, given $\xi,\eta\in\Lambda^h\mathfrak{g}^\ast$ with different weights, then without loss of generality one can take $\xi=\theta_{i_1}\wedge\cdots\wedge\theta_{i_h}$ and $\eta=\theta_{j_1}\wedge\cdots\wedge\theta_{j_h}$ with
\begin{align*}
    w(\xi)=w(\theta_{1_1})+\cdots+w(\theta_{i_h})\neq w(\eta)=w(\theta_{j_1})+\cdots+w(\theta_{j_h})\,.
\end{align*}
This means that there is at least an index $l\in\lbrace 1,\ldots,h\rbrace$ such that $w(\theta_{i_l})\neq w(\theta_{j_l})$, that is $\theta_{i_l}^\ast\in V_i$ and $\theta_{j_l}^\ast\in V_j$ belong to different layers and therefore 
\begin{align*}
    \langle \theta_{i_1}\wedge\cdots\wedge\theta_{i_h},\theta_{j_1}\wedge\cdots\wedge\theta_{j_h}\rangle=\langle \theta_{i_1}^\ast\wedge\cdots\wedge\theta_{i_h}^\ast,\theta_{j_1}\wedge\cdots\wedge\theta_{j_h}\rangle=0\,.
\end{align*}
\end{proof}
%\begin{corollary}\label{weights covectors cor}
As a consequence, the space $\Lambda^h\mathfrak{g}^\ast$ of $h$-covectors can be expressed as a direct sum of subspaces which depend on the weight:
\begin{align*}
    \Lambda^h\mathfrak{g}^\ast=\bigoplus_{a+b=h}\Lambda^{a,b}\mathfrak{g}^\ast\,,
\end{align*}
where $\Lambda^{a,b}\mathfrak{g}^\ast$ denotes the space of $a+b=h$-covectors of weight $a$. 

Moreover, it should be noted that the nilpotency of the Carnot group $\mathbb{G}$ translates into the fact that the range of the possible weights is finite. Given its $n$-dimensional Lie algebra $\mathfrak{g}$, the maximal weight will be attained by the $n$-covectors, and $w(\theta_1\wedge\cdots\wedge\theta_n)=w(\theta_1)+\cdots+w(\theta_n)=Q$ coincides with the Hausdorff dimension of the underlying Carnot group $\mathbb{G}$. If we assign weight zero to 0-covectors $\Lambda^0\mathfrak{g}^\ast=\mathbb{R}$, then all the possible weights will be contained in the discrete set $\lbrace 0,\ldots,Q\rbrace\subset\mathbb{N}$.
%\end{corollary}

All the considerations we have made so far for $h$-covectors can be extended to smooth $h$-forms, so we can express the space of smooth forms in $\mathbb{G}$ as a multicomplex. 

In the case of a Lie group $\mathbb{G}$, one can consider the subcomplex of the de Rham complex 
consisting of the left-invariant differential forms. A left-invariant $h$-form is uniquely determined
by its value at the identity, where it defines a linear map $\Lambda^h\mathfrak{g}\to\mathbb{R}$, by identifying the tangent space at the identity with the Lie algebra $\mathfrak{g}$. In other words, we can think of a left-invariant $h$-form as an element of $\Lambda^h\mathfrak{g}^\ast$.

Moreover, in the case of a connected Lie group $\mathbb{G}$, we can identify the tangent space $T_x\mathbb{G}$ to $\mathbb{G}$ at any point $x\in\mathbb{G}$ with $\mathfrak{g}$ by means of the isomorphism $dL_x$, where $L_x$ denotes the left-translation by $x$. For $\theta\in\Lambda^h\mathfrak{g}^\ast$ and $f\in C^\infty(\mathbb{G})$, we can regard $\theta\otimes f$ as a smooth $h$-form $\Omega^h$ by $(\theta\otimes f)_x=f(x)(dL_x^{-1})\theta$. This then gives rise to an isomorphism
\begin{align*}
    Hom_\mathbb{R}\big(\Lambda^h\mathfrak{g},C^\infty(\mathbb{G})\big)\cong\Lambda^h\mathfrak{g}^\ast\otimes C^\infty(\mathbb{G})\to\Gamma(\Lambda^h\mathfrak{g}^\ast)=\Omega^h\,,
\end{align*}
where $\Gamma(\Lambda^h\mathfrak{g}^\ast)$ denotes the space of smooth sections of $\Lambda^h\mathfrak{g}^\ast$.

\begin{definition}\textbf{Weights of smooth forms}

Given $\alpha\in\Omega^1$, we say that $\alpha$ has pure weight $p$ if 
\begin{align*}
    \alpha=\sum_j\theta_j\otimes f_j\;\text{ where }f_j\in C^\infty(\mathbb{G})\text{ and }\theta_j\in\Lambda^{p,-p+1}\mathfrak{g}^\ast\,,
\end{align*}
and we will write $w(\alpha)=p$.

In general, given an $h$-form $\beta\in\Omega^h$, we say that $\beta$ has pure weight $p$ and write $w(\beta)=p$, if $\beta$ can be expressed as a linear combination of $h$-forms $\theta_{j_1}\wedge\cdots\wedge\theta_{j_h}\otimes f_j$ for which each $\theta_{j_1}\wedge\cdots\wedge\theta_{j_h}$ has weight $p$ and hence belongs to $\Lambda^{p,-p+h}\mathfrak{g}^\ast$.
\end{definition}

For the sake of brevity, from now on we will use the most commonly used notation for an arbitrary smooth $h$-form $\alpha=\sum_j f_j\theta_{j_1}\wedge\cdots\wedge\theta_{j_h}$ expressed in terms of a basis of left-invariant forms $\lbrace\theta_1,\ldots,\theta_n\rbrace$ (for example the one considered in Remark \ref{remark basis}).
\begin{remark}\label{smooth forms weights}

The space of smooth forms $\Omega^\ast$ inherits the direct sum decomposition of $\Lambda^\ast\mathfrak{g}^\ast$ given by the weight. In order to highlight this relationship, we will use the following notation
\begin{align}\label{forms decomp weight}
    \Omega^h=\Gamma(\Lambda^h\mathfrak{g}^\ast)=\bigoplus_{a+b=h}\Gamma(\Lambda^{a,b}\mathfrak{g}^\ast)=\bigoplus_{a+b=h}\Omega^{a,b}\,,
\end{align}
where  $\Omega^{a,b}$ denotes the space of smooth $a+b=h$-forms of weight $a$. 

\end{remark}
\begin{lemma}\label{lemma split d}
Given a Carnot group $\mathbb{G}$ of nilpotency step $s$, the decomposition of differential forms based on weights \eqref{forms decomp weight} induces a decomposition of the exterior de Rham differential $d$ which can be easily expressed in terms of the weight increase. In fact, for an arbitrary $h$-form $\alpha\in\Omega^{p,-p+h}$ of pure weight $p$, we can write
\begin{align*}
    d\alpha=d_0\alpha+d_1\alpha+\cdots+d_s\alpha\,,
\end{align*}
where each $d_i$ denotes the part of $d$ which increases the weight of the form $\alpha$ by $i$, that is
\begin{align*}
    d_i\alpha\in\Omega^{p+i,-i-p+h+1}\;\text{ for }i=0,\ldots,s\,.
\end{align*}
\end{lemma}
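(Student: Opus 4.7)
The plan is to unpack $d$ on an arbitrary pure-weight monomial using the Leibniz rule and to track how each resulting summand changes the weight, invoking the stratification property $[V_i,V_j]\subset V_{i+j}$ to control the purely algebraic piece.

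First I would reduce to the case of a single monomial $\alpha=f\,\theta_{j_1}\wedge\cdots\wedge\theta_{j_h}$ where the $\theta_{j_r}$ are elements of the basis dual to the stratification-adapted basis of Remark \ref{remark basis}, so that each $\theta_{j_r}$ has pure weight and $w(\alpha)=w(\theta_{j_1})+\cdots+w(\theta_{j_h})=p$. By linearity, proving the decomposition on such monomials suffices. The Leibniz rule gives
\begin{equation*}
d\alpha \;=\; df\wedge\theta_{j_1}\wedge\cdots\wedge\theta_{j_h} \;+\; f\sum_{r=1}^{h}(-1)^{r-1}\theta_{j_1}\wedge\cdots\wedge d\theta_{j_r}\wedge\cdots\wedge\theta_{j_h},
\end{equation*}
so I only need to analyse the weight of $df$ and of the left-invariant $d\theta_{j_r}$.

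Next, writing $\{X_1,\ldots,X_n\}$ for the dual left-invariant frame, I would use $df=\sum_k X_k(f)\,\theta_k$. Splitting this sum according to the layer containing $X_k$, I can write $df=\sum_{i=1}^{s}(df)_i$ where $(df)_i=\sum_{k:\,w(\theta_k)=i}X_k(f)\theta_k\in\Omega^{i,1-i}$. Wedging with $\theta_{j_1}\wedge\cdots\wedge\theta_{j_h}$, which has pure weight $p$, produces a form of pure weight $p+i$, hence lying in $\Omega^{p+i,-i-p+h+1}$. This supplies the terms $d_1\alpha,\ldots,d_s\alpha$ and nothing of higher index, because the nilpotency step $s$ bounds the weight of any 1-covector.

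For the algebraic piece I would use the Chevalley–Eilenberg formula $d\theta_{j_r}(X,Y)=-\theta_{j_r}([X,Y])$. If $X\in V_a$ and $Y\in V_b$, then $[X,Y]\in V_{a+b}$, so $\theta_{j_r}([X,Y])$ vanishes unless $a+b=w(\theta_{j_r})$; equivalently, $d\theta_{j_r}$ is a sum of terms $\theta_a\wedge\theta_b$ with $w(\theta_a)+w(\theta_b)=w(\theta_{j_r})$. Thus $d\theta_{j_r}$ has pure weight $w(\theta_{j_r})$, and consequently each wedge $\theta_{j_1}\wedge\cdots\wedge d\theta_{j_r}\wedge\cdots\wedge\theta_{j_h}$ has pure weight $p$, contributing only to $d_0\alpha\in\Omega^{p,-p+h+1}$. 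Collecting the two pieces yields $d\alpha=d_0\alpha+d_1\alpha+\cdots+d_s\alpha$ with each $d_i\alpha$ of the claimed bidegree.

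The only mildly subtle point is the cleanness of the two contributions: the function-derivative contributes to all strict weight increases $1,\ldots,s$, while the Chevalley–Eilenberg contribution is weight-preserving thanks precisely to the grading condition $[V_i,V_j]\subset V_{i+j}$. Everything else is bookkeeping, and the upper bound $s$ on the index is automatic from the finiteness of the stratification.
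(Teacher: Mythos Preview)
Your proof is correct and follows essentially the same route as the paper: split $d\alpha$ via the Leibniz rule into the function-derivative piece (which, after decomposing $df$ layer by layer, yields $d_1,\ldots,d_s$) and the left-invariant piece (which is weight-preserving by the Chevalley--Eilenberg formula together with $[V_i,V_j]\subset V_{i+j}$, hence gives $d_0$). The only cosmetic difference is that the paper phrases the weight-preservation of $d_0$ as a contradiction argument routed through the lower central series, whereas you invoke the grading condition directly; your version is slightly more streamlined but the content is the same.
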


\begin{proof}
Given an arbitrary $h$-form of weight $p$, $\alpha=\sum_jf_j\theta^h_j$ with $\theta_j^h\in\Lambda^{p,h-p}\mathfrak{g}^\ast$, the exterior differential applied to $\alpha$ will have the following expression: 
\begin{align*}
    d(\sum_jf_j\theta_j^h)=\sum_j\big(df_j\wedge\theta_j^h+f_j\,d\theta_j^h\big)=\sum_jdf_j\wedge\theta_j^h+\sum_jf_j\,d\theta_j^h\,.
\end{align*}

By considering the orthonormal basis $\lbrace X_1,\ldots,X_n\rbrace$ of Remark \ref{remark basis}, we obtain a very explicit expression for the first addend, that is
\begin{align*}
    \sum_jdf_j\theta_j^h=\sum_j\sum_{l=1}^nX_lf_j\,\theta_l\wedge\theta_j^h=\sum_{i=1}^s\sum_{X_l\in V_i}\sum_jX_lf_j\,\theta_l\wedge\theta_j^h=\sum_{i=1}^sd_i\alpha\,.
\end{align*}
For each $i=1,\ldots,s$ we see that
\begin{align*}
    d_i\alpha=\sum_{X_l\in V_i}\sum_{j}X_lf_j\,\theta_l\wedge\theta_j^h\in \Omega^{p+i,-i-p+h+1}\;\text{ since }X_l\in V_i\,. 
\end{align*}

Regarding the second addend, one can easily see that unless $d\theta_i^h$ vanishes, then $w(d\theta_j^h)=w(\theta_j^h)=p$, that is it keeps the weight constant. 
In the case of Carnot groups, one can prove this by using the group's dilations \cite{franchi2015differential}, however here we will follow the reasoning in \cite{tripaldi2020rumin} and use the relationship between the stratification of a Carnot group and the lower central series $\lbrace\mathfrak{g}^{(i)}\rbrace$ of its Lie algebra.

By definition of a stratification \eqref{stratification}, if we consider an orthonormal basis of $\mathfrak{g}$ adapted to the lower central series, we have the correspondence
\begin{align*}
    V_i=\mathfrak{g}^{(i-1)}\cap\big(\mathfrak{g}^{(i)}\big)^\perp\;,\;\text{ where }\mathfrak{g}^{(0)}=\mathfrak{g} \text{ and }\mathfrak{g}^{(i)}=[\mathfrak{g},\mathfrak{g}^{(i-1)}]\;,\;i\ge 1\,.
\end{align*}
This implies that a left-invariant 1-form $\theta\in\Lambda^{p,-p+1}\mathfrak{g}^\ast$ has weight $p$ if and only if $\theta=X^\ast$ with $X\in V_p=\mathfrak{g}^{(p-1)}\cap(\mathfrak{g}^{(p)})^\perp$. If we argue by contradiction, then  $d\theta=\theta_1^2+\cdots+\theta_m^2$ with at least a left-invariant 2-form $\theta_l^2\in\Lambda^2\mathfrak{g}^\ast$ with $w(\theta_l^2)\neq p$. If we then express this 2-form in terms of an orthonormal basis $\lbrace X_1,\ldots,X_n\rbrace$ adapted to the lower central series (and hence to the stratification), we get
\begin{align*}
    \theta_l^2=-c X_{l_1}^\ast\wedge X_{l_2}^\ast \text{ with }w(X_{l_1}^\ast)+w(X_{l_2}^\ast)\neq p\,.
\end{align*}
In other words, if $X_{l_1}\in V_{p_1}$ and $X_{l_2}\in V_{p_2}$, then $[X_{l_1},X_{l_2}]\in V_{p_1+p_2}\neq V_{p}$. However, by definition of the exterior differential $d$ on left-invariant forms, we get that
\begin{align*}
    -c=\langle d\theta\mid X_{l_1}\wedge X_{l_2}\rangle=-\langle \theta\mid [X_{l_1},X_{l_2}]\rangle=-\langle X,[X_{l_1},X_{l_2}]\rangle
\end{align*}
which implies that $[X_{l_1},X_{l_2}]=cX\in V_p$ and we have reached a contradiction.

The same result extends to any left-invariant $h$-form $\theta^h\in\Lambda^h\mathfrak{g}^\ast$ by using the Leibniz rule of the exterior differential $d$.

If we use the notation $d_0$ to indicate this second addend, we get
\begin{align}\label{d_0}
    d_0\alpha=\sum_jf_jd\theta_j^h\in\Omega^{p,-p+h+1}\,.
\end{align}

\end{proof}
\begin{proposition}
The de Rham complex $(\Omega^\ast,d)$ on a Carnot group of nilpotency step $s$ is a multicomplex with maps $d_i\colon\Omega^\ast\to\Omega^\ast$ of bidegree $\vert d_i\vert=(i,1-i)$ with $i=0,\ldots,s$.
\end{proposition}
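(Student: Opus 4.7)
The plan is to verify the two requirements in the definition of a multicomplex: that each $d_i$ has the asserted bidegree $(i,1-i)$, and that the quadratic relations $\sum_{i+j=n} d_i d_j = 0$ hold for every $n\ge 0$.

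For the bigrading, I would take $C_{a,b}=\Omega^{a,b}$, where $a+b$ is the form degree and $a$ is the weight, following the decomposition \eqref{forms decomp weight}. Extend the operators by setting $d_i=0$ for $i>s$, so that only finitely many of them are non-trivial, which is consistent with the nilpotency step of $\mathbb{G}$. The bidegree claim is then an immediate consequence of Lemma \ref{lemma split d}: if $\alpha\in\Omega^{p,-p+h}=C_{p,-p+h}$, then $d_i\alpha\in\Omega^{p+i,-i-p+h+1}=C_{p+i,(-p+h)+(1-i)}$, so $|d_i|=(i,1-i)$ as required.

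The heart of the proof is the quadratic relation. Here I would start from $d^2=0$ on the ordinary de Rham complex. Using the decomposition $d=d_0+d_1+\cdots+d_s$ from Lemma \ref{lemma split d}, one gets
\begin{align*}
0 \;=\; d^2\alpha \;=\; \sum_{i,j\ge 0} d_i d_j\alpha \;=\; \sum_{n\ge 0}\bigg(\sum_{i+j=n} d_i d_j\alpha\bigg).
\end{align*}
Each composition $d_i d_j$ has bidegree $(i+j,\,2-i-j)$, so for a fixed pure-weight input the summand $\sum_{i+j=n}d_id_j\alpha$ lies entirely in $\Omega^{p+n,\,-p+h+2-n}$. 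Because the weight decomposition $\Omega^\ast=\bigoplus_{a,b}\Omega^{a,b}$ is a genuine direct sum (established via the orthogonality of the layers $V_i$ in Remark \ref{remark basis} and the preceding proposition), terms indexed by distinct values of $n$ lie in distinct summands, and the vanishing of the total sum forces $\sum_{i+j=n}d_id_j\alpha=0$ for every $n\ge 0$. Since this holds for all pure-weight $\alpha$ and both sides are linear, the identity \eqref{sum=0} holds on all of $\Omega^\ast$.

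There is no real obstacle here beyond bookkeeping: the content is essentially that the two gradings (form degree via $d^2=0$ and weight via Lemma \ref{lemma split d}) can be combined. The only point that deserves care is verifying that the decomposition $d=\sum_i d_i$ is canonical, so that the identity $d^2=0$ can legitimately be split by weight; this is precisely what Lemma \ref{lemma split d} provides, since each $d_i$ is characterised as the unique weight-$i$-increasing component of $d$.
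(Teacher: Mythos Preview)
Your proof is correct and follows essentially the same approach as the paper's: both verify the bidegree of $d_i$ by citing Lemma~\ref{lemma split d}, then expand $d^2\alpha=0$ for a pure-weight form, group the result by total weight increase $n=i+j$, and use the orthogonality (direct sum) of the weight components to conclude that each $\sum_{i+j=n}d_id_j$ vanishes separately. Your version is slightly more explicit about the bookkeeping (setting $d_i=0$ for $i>s$, extending by linearity), but the argument is the same.
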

\begin{proof}
As already shown in Remark \ref{smooth forms weights}, each $\Omega^{a,b}$ is a $C^\infty(\mathbb{G})$-module with bidegree $(a,b)\in\mathbb{Z}\times\mathbb{Z}$. Moreover, we have already established the existence of the differential maps $d_i\colon\Omega^\ast\to\Omega^\ast$ of bidegree $\vert d_i\vert=(i,1-i)$ in the previous lemma, so we are then left to show \eqref{sum=0} holds.

This equality follows directly from the fact that $(\Omega^\ast,d)$ is a complex, that is $d^2\alpha=0$ for any smooth form $\alpha\in\Omega^{p,-p+h}$. In fact, is we expand this formula by gathering all the different terms according to their weight we get
\begin{align*}
    d^2\alpha=&d(d_0\alpha+d_1\alpha+\cdots+d_s\alpha)\\=&(d_0+d_1+\cdots+d_s)(d_0\alpha+d_1\alpha+\cdots+d_s\alpha)\\=&d_0^2\alpha+(d_0d_1+d_1d_0)\alpha+(d_0d_2+d_1d_1+d_2d_0)\alpha+\cdots+d_s^2\alpha\\=&\sum_{n=0}^{2s}\sum_{i+j=n}d_id_j\alpha=0\,.
\end{align*}
Furthermore, for each $n=0,\ldots,2s$ we have $\sum_{i+j=n}d_id_j\alpha\in \Omega^{p+n,-p-n+h+2}$, that is each addend has different weight. Since we know that forms of different weight are orthogonal, this implies that each addend will be zero.
\end{proof}
%\begin{corollary}
The de Rham complex $(\Omega^\ast,d)$ on a Carnot group $\mathbb{G}$ of nilpotency step $s$ and Hausdorff dimension $Q$ is a multicomplex, and therefore we can study its associated total complex $\mathrm{Tot}\Omega$ for which
\begin{align*}
    (\mathrm{Tot}\Omega)_h=\bigoplus_{a= 0}^Q\Omega^{a,-a+h} \text{ with differential }(d\alpha)_a=\sum_{i=0}^sd_i(\alpha)_{a-i}\,.
\end{align*}
In particular, we will be interested in the spectral sequence associated to the its filtration defined in \eqref{filtration tot} which is given by
\begin{align}\label{tot filtration by weights}
    (F_p\Omega)_h=\bigoplus_{\substack{a+b=h\\a\ge p}}\Omega^{a,b}=\bigoplus_{a=p}^Q\Omega^{a,-a+h}=\lbrace \alpha\in\Omega^\ast\mid w(\alpha)\ge p\rbrace\,.
\end{align}
%\end{corollary}

\section{The Rumin differentials as the differentials of the spectral sequence on this multicomplex}\label{section 4}

The purpose of this section is to shed a light into the relationship between the differential operators that appear in the Rumin complex $(E_0^\ast,d_c)$ of a Carnot group and the various differentials which appear in the various pages of the spectral sequence obtained from the weight filtration on forms.
We will not get into the details of the construction of the subcomplex $(E_0^\ast,d_c)$, and we will only give a short presentation of its main properties and focus in particular on the explicit formulation of the differentials $d_c$. For a more detailed
presentation we refer to Rumin’s paper \cite{rumin2000around} or the expository article \cite{franchi2015differential}.
\begin{definition}\textbf{The Rumin complex}

Given a Carnot group $\mathbb{G}$ of nilpotency step $s$, the Rumin complex $(E_0^\ast,d_c)$ is a subcomplex of the de Rham complex $(\Omega^\ast,d)$ where
\begin{itemize}
    \item $E_0^h=Ker\,d_0\cap\big(Im\,d_0\big)^\perp\cap\Omega^h$;
    \item $d_c=\Pi_{E_0}d\Pi_E$, where $\Pi_{E_0}=Id-d_0d_0^{-1}-d_0^{-1}d_0$ is the projection on the subspace ${E_0}$. \\A thorough explanation of $d_0^{-1}$ is given in Definition \ref{def d_0 inverse}. Moreover, the operator $d$ here is the exterior de Rham differential, and the projection $\Pi_E=Id-dPd_0^{-1}-Pd_0^{-1}d$ is defined in terms of $d_0^{-1}$, as well as the differential operator $P$ presented in Definition \ref{def P};
    \item $(E_0^\ast,d_c)$ is conjugated to the de Rham complex $(\Omega^\ast,d)$, that is it computes the same cohomology as the de Rham cohomology of the underlying Carnot group $\mathbb{G}$.
\end{itemize}

\end{definition}

%\begin{remark}\label{remark d_0}
The map $d_0$ in Rumin's construction is exactly the operator defined in \eqref{d_0}, i.e. the part of the exterior differential $d$ that does not change the weight of the forms. It is important to stress the fact that $d_0$ coincides with the action of $d$ on left-invariant forms, which means that not only $(\Omega^\ast,d_0)$ is a complex, but also that the quotient $Ker\,d_0/Im\,d_0$ is the Lie algebra cohomology of the Carnot group $\mathbb{G}$ with coefficients in $C^\infty(\mathbb{G})$. 

Moreover, seen within the context of the spectral sequence that originates from considering the filtration by weights \eqref{tot filtration by weights}, these quotients coincide with the quotients $E_1^{p,\ast}$ that appear from the multicomplex $(\Omega^\ast,d)$. By Proposition \ref{propo 2.8} and Definition \ref{def 2.6}, we know
\begin{align*}
    E_1^{p,\ast}= Z_1^{p,\ast}/B_1^{p,\ast}=\frac{\lbrace \alpha\in\Omega^{p,\ast}\mid d_0\alpha=0\rbrace}{\lbrace \alpha\in\Omega^{p,\ast}\mid \exists \beta\in\Omega^{p,\ast}\text{ such that }d_0\beta=\alpha\rbrace}
\end{align*}
so that
\begin{align*}
    \frac{Ker\,d_0\colon\Omega^h\to\Omega^{h+1}}{Im\,d_0\colon\Omega^{h-1}\to\Omega^h}=\bigoplus_{p=0}^QE_1^{p,-p+h}\,.
\end{align*}
Within the context of the Rumin complex however, we would like to consider these forms as subspaces of differential forms, and not quotients. In order to achieve this, it is sufficient to introduce a metric on $\mathbb{G}$ and consider instead the subspace $Ker\,d_0\cap(Im\,d_0)^\perp$. 

The notation $E_0^\ast$ to denote such subspaces was indeed inspired by the language of spectral sequences, however it may be slightly confusing within this context since each $E_0^h$ corresponds to $\bigoplus_{p=0}^QE_1^{p,-p+h}$ on the first page, and not the quotients $\bigoplus_{p=0}^QE_0^{p,-p+h}$.
%\end{remark}

\begin{definition}\label{def d_0 inverse}\textbf{The operator $d_0^{-1}$}

In order to define an ``inverse'' of the operator $d_0$, one can exploit the map
\begin{align*}
    d_0\colon\Lambda^h\mathfrak{g}^\ast/Ker\,d_0\longrightarrow\Lambda^{h+1}\mathfrak{g}^\ast\,,
\end{align*}
so that by taking an arbitrary $\beta\in\Lambda^{h+1}\mathfrak{g}^\ast$ with $\beta\neq 0$, there exists a unique $\alpha\in \Lambda^h\mathfrak{g}^\ast\cap(Ker\,d_0)^\perp$ such that $d_0\alpha=\beta+\xi$, with $\xi\in(Im\,d_0)^\perp$.

It should be noted that in general we will have $\beta\notin Im\,d_0$, so that there exist $\delta\in(Im\,d_0)^\perp$ and $\gamma\in Im\,d_0$ such that $\beta=\gamma+\delta$. For example, in the expression above $d_0\alpha=\beta+\xi$, we would have $\xi=-\delta$.

Therefore, we can define
\begin{align*}
    d_0^{-1}\colon\Lambda^{h+1}\mathfrak{g}^\ast&\longrightarrow \Lambda^h\mathfrak{g}^\ast\cap(Ker\,d_0)^\perp\\\beta&\mapsto d_0^{-1}\beta=\alpha\,.
\end{align*}
Just like we did for the operator $d_0$ in \eqref{d_0}, one can extend this operator to the space of all smooth forms $\Omega^h=\Gamma(\Lambda^h\mathfrak{g}^\ast)$.

Finally, it is important to notice that $d_0^{-1}$ also preserves the weight of the form, so that
\begin{align}\label{d_0 inverse}
    d_0^{-1}\colon\Omega^{h+1}\to\Omega^h\;,\;d_0^{-1}\big(\Omega^{a,-a+h+1}\big)\subset\Omega^{a,-a+h}\,.
\end{align}

\end{definition}
\begin{definition}\label{def P}\textbf{The operator $P$}

The definition of the operator $P$ is based on the following differential operator:
\begin{align*}
    d_0^{-1}d=d_0^{-1}\big(d_0+d_1+\cdots+d_s\big)=d_0^{-1}d_0+d_0^{-1}(d-d_0)\,,
\end{align*}
where here we are using the splitting of the exterior differential $d$ that was already presented in Lemma \ref{lemma split d}.

It is clear that by the nilpotency of the Carnot group $\mathbb{G}$ considered, there exists an $N\in\mathbb{N}$ for which $[d_0^{-1}(d-d_0)]^N\equiv 0$. We then define the differential operator $P$ as follows:
\begin{align}
    P\colon =\sum_{k=0}^N(-1)^k[d_0^{-1}(d-d_0)]^k\,.
\end{align}
\end{definition}

\begin{remark}\label{remark 2.7}
Let us point out that once we introduce a metric on $\mathbb{G}$, then for any $r\ge 1$, the condition $x\in Z_{r}^{p,\ast}$ of \eqref{star1} can be rephrased in terms of the operator $d_0^{-1}$ once we require the elements $z_{p+j}\in C_{p+j,\ast}$ to be orthogonal to the $Ker\,d_0$. 

In other words, given $x\in Z_r^{p,\ast}$, we have that there exist $z_{p+j}\in C_{p+j,\ast}\cap(Ker\,d_0)^\perp$ with $1\le j\le r-1$ such that
\begin{align*}
    z_{p+1}=&d_0^{-1}d_1x\\
    z_{p+2}=&d_0^{-1}(d_2x-d_1z_{p+1})=d_0^{-1}d_2x-d_0^{-1}d_1d_0^{-1}d_1x\\
    z_{p+3}=&d_0^{-1}(d_3x-d_2z_{p+1}-d_1z_{p+2})\\=&d_0^{-1}d_3x-d_0^{-1}d_2d_0^{-1}d_1x-d_0^{-1}d_1d_0^{-1}d_2x+d_0^{-1}d_1d_0^{-1}d_1d_0^{-1}d_1x\,.
\end{align*}
If we introduce the multi-index notation 
\begin{align*}
    (d_0^{-1}d)_{I_m^j}\colon = (d_0^{-1}d_{i_1})(d_0^{-1}d_{i_2})\cdots (d_0^{-1}d_{i_m})
\end{align*}
where $I_m^j=(i_1,\ldots,i_m)\in\mathbb{N}_+^m$ for which $\vert I_m^j\vert=i_1+i_2+\cdots+i_m=j$, then for any $j=1,\ldots,r-1$, we have the expression
\begin{align}\label{formula z via x}
    z_{p+j}=\sum_{m=1}^j(-1)^{m-1}\sum_{I_m^j}(d_0^{-1}d)_{I_m^j}x\,.
\end{align}
For example, in the case where $j=4$, we would have
\begin{align*}
    z_{p+4}=&\sum_{m=1}^4(-1)^{m-1}\sum_{I_m^4}(d_0^{-1}d)_{I_m^4}x\\=&d_0^{-1}d_4x-(d_0^{-1}d_1d_0^{-1}d_3+d_0^{-1}d_2d_0^{-1}d_2+d_0^{-1}d_3d_0^{-1}d_1)x+\\&+(d_0^{-1}d_2d_0^{-1}d_1d_0^{-1}d_1+d_0^{-1}d_1d_0^{-1}d_2d_0^{-1}d_1+d_0^{-1}d_1d_0^{-1}d_1d_0^{-1}d_2)x+\\&-(d_0^{-1}d_1d_0^{-1}d_1d_0^{-1}d_1d_0^{-1}d_1)x\,.
\end{align*}
\end{remark}
\begin{remark}
By using the explicit expression \eqref{formula z via x} of the $z_{p+j}\in C_{p+j,\ast}\cap\big(Ker\,d_0\big)^\perp$ in terms of $x$ found in Remark \ref{remark 2.7}, it is possible to obtain the following expression for the $r^{th}$-differential of the spectral sequence:
\begin{align}\label{rth diff d_0 inverse}
    \partial_r([x])=\bigg[d_rx-\sum_{i=1}^{r-1}d_i\bigg(\sum_{m=1}^{r-i}(-1)^{m-1}\sum_{I_m^{r-i}}(d_0^{-1}d)_{I_m^{r-i}}\bigg)x\bigg]\,.
\end{align}
For example, for $r=2$ we will have
\begin{align*}
    \partial_2[x]=\bigg[d_2x-d_1\bigg((-1)^0\sum_{I_1^1}(d_0^{-1}d)_{I_1^1}\bigg)x\bigg]=\big[d_2x-d_1d_0^{-1}d_1x\big]\,.
\end{align*}
In the case of $r=3$, we will have
\begin{align*}
    \partial_3[x]=&\bigg[d_3x-d_1\bigg(\sum_{m=1}^2(-1)^{m-1}\sum_{I_m^2}(d_0^{-1}d)_{I_m^2}\bigg)x-d_2\bigg((-1)^0\sum_{I_1^1}(d_0^{-1}d)_{I_1^1}\bigg)x\bigg]\\=&\bigg[d_3x-d_1\sum_{I_1^2}(d_0^{-1}d)_{I_1^2}x+d_1\sum_{I_2^2}(d_0^{-1}d)_{I_2^2}x-d_2d_0^{-1}d_1x\bigg]\\=&\big[d_3x-d_1d_0^{-1}d_2x+d_1d_0^{-1}d_1d_0^{-1}d_1x-d_2d_0^{-1}d_1x\big]\,.
\end{align*}
\end{remark}

\begin{theorem}\label{prop 4.7}
Given an arbitrary Carnot group $\mathbb{G}$, the differential part of the Rumin differentials $d_c$ coincides with the sum of the differentials $\partial_r$ that appear in the multicomplex spectral sequence generated by considering the filtration by weights over the space of smooth forms.
\end{theorem}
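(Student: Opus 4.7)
The plan is to fix an arbitrary $x\in E_0^{p,\ast}$ of pure weight $p$, expand $d_c x=\Pi_{E_0}\,d\,\Pi_E x$ explicitly, decompose the result by weight, and identify the weight-$(p+r)$ component with the formula~\eqref{rth diff d_0 inverse} for $\partial_r([x])$. First I would simplify $\Pi_E x$: since $x\in E_0^\ast$ lies in $Ker\,d_0\cap(Im\,d_0)^\perp$, one has $d_0 x=0$ and $d_0^{-1}x=0$ (the latter because $d_0^{-1}$ lands in $(Ker\,d_0)^\perp$ and necessarily vanishes on $(Im\,d_0)^\perp$, by Definition~\ref{def d_0 inverse}). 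Setting $A:=d_0^{-1}(d-d_0)$, the definition of $\Pi_E$ collapses to $\Pi_E x=x-PAx$. A short manipulation of the geometric-series expression for $P$ in Definition~\ref{def P} yields the telescoping identity $PA=Id-P$ (using $A^N=0$), so $\Pi_E x=Px=x+\sum_{m\ge 1}(-1)^m A^m x$.

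Second, I would match $Px$ with the elements $z_{p+j}$ of Remark~\ref{remark 2.7}. Since $d_0^{-1}$ preserves weight and each $d_i$ raises it by $i$, the contribution of $A^m x$ at weight $p+j$ equals the multi-index sum $\sum_{I_m^j}(d_0^{-1}d)_{I_m^j}x$. Comparing with formula~\eqref{formula z via x} one obtains $(Px)_{p+j}=-z_{p+j}$ for $j\ge 1$ and $(Px)_p=x$, so $\Pi_E x=x-\sum_{j\ge 1}z_{p+j}$. Applying $d=d_0+d_1+\cdots+d_s$ and extracting the weight-$(p+r)$ component then yields
\begin{align*}
    (d\Pi_E x)_{p+r} = d_r x - \sum_{i=0}^{r-1} d_i z_{p+r-i} = \Bigl(d_r x - \sum_{i=1}^{r-1} d_i z_{p+r-i}\Bigr) - d_0 z_{p+r}\,,
\end{align*}
after which $\Pi_{E_0}=Id-d_0 d_0^{-1}-d_0^{-1}d_0$ annihilates $d_0 z_{p+r}\in Im\,d_0$ and leaves the bracketed remainder, which is precisely the representative of $\partial_r([x])$ singled out by~\eqref{rth diff d_0 inverse}.

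The main obstacle in executing this plan is verifying that the bracketed remainder $d_r x-\sum_{i=1}^{r-1}d_i z_{p+r-i}$ already lies in $Z_r^{p+r,\ast}$---in particular that it is $d_0$-closed---so that the action of $\Pi_{E_0}$ on it reduces to the orthogonal projection onto $E_0$ that realises the class $\partial_r[x]$. This closure is an algebraic compatibility between the multicomplex identities $\sum_{i+j=n}d_i d_j=0$ and the recursive definitions of the $z_{p+j}$, and I expect it to be handled by induction on $r$, using the lower-weight identities $d_0 z_{p+k}=d_k x-\sum_{i=1}^{k-1}d_i z_{p+k-i}$ for $k<r$ to cancel the a priori problematic terms in $d_0\bigl(d_r x-\sum_{i=1}^{r-1}d_i z_{p+r-i}\bigr)$. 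Once this closure is established, summing the weight components gives $d_c x=\sum_{r\ge 1}(d_c x)_{p+r}$, matching $\sum_{r\ge 1}\partial_r[x]$ term by term and completing the identification asserted by the theorem.
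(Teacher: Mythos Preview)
Your computation is essentially the paper's own argument: reduce $\Pi_E$ on $E_0$ to the geometric series $P=\sum_{m\ge 0}(-1)^m A^m$ with $A=d_0^{-1}(d-d_0)$, identify the weight components of $Px$ with the $-z_{p+j}$ of Remark~\ref{remark 2.7}, apply $d$, kill the $d_0$-part via $\Pi_{E_0}$, and read off the expression~\eqref{rth diff d_0 inverse} for each weight. Your telescoping identity $PA=Id-P$ is a slightly cleaner way to obtain $\Pi_E x=Px$ than the paper's direct expansion, but the content is identical.

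Where your proposal diverges is in what you flag as the ``main obstacle''. The paper does \emph{not} verify that the bracketed remainder $d_r x-\sum_{i=1}^{r-1}d_i z_{p+r-i}$ is $d_0$-closed, nor does it need to. The theorem as stated asserts only that the \emph{differential operator expression} sandwiched between the two $\Pi_{E_0}$'s coincides with the formal sum of the operators appearing in~\eqref{rth diff d_0 inverse}; once your computation reaches
\[
d_c x \;=\; \Pi_{E_0}\sum_{r\ge 1}\Bigl(d_r-\sum_{i=1}^{r-1}d_i z_{p+r-i}\Bigr)x,
\]
that identification is complete. The paper then explicitly records --- as a caveat, not as an obstacle to be overcome --- that $d_c$ and the $\partial_r$ act on genuinely different spaces: $d_c$ maps $E_0=Z_1\cap B_1^\perp$ to itself via the orthogonal projection $\Pi_{E_0}$, whereas $\partial_r$ is a map of quotients $Z_r/B_r\to Z_r/B_r$. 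In particular, for a general $x\in E_0$ there is no guarantee that $x\in Z_r^{p,\ast}$ for every $r$, so $\partial_r[x]$ need not even be defined, and the $d_0$-closure you want to prove may fail. The ``morally'' in the introduction and the extended discussion at the end of the paper's proof are there precisely to signal this: the match is at the level of operator formulas, not of maps between identical spaces. Drop the obstacle paragraph and your argument is done.
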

\begin{proof}
The claim follows once we consider the explicit expression of the Rumin differentials 
\begin{align*}
    d_c\colon E_0^h\to E_0^{h+1}\,.
\end{align*}

Given an arbitrary $h$-form $\alpha\in\Omega^h$, we will have that
\begin{align*}
    d_c\big(\Pi_{E_0}\alpha\big)=\Pi_{E_0}d\Pi_E\Pi_{E_0}\alpha\,.
\end{align*}
If $\alpha'=\Pi_{E_0}\alpha\in E_0^h=Ker\,d_0\cap(Im\,d_0)^{\perp}\cap\Omega^h$, we have $d_0\alpha'=d_0^{-1}\alpha'=0$ so that
\begin{align*}
    \Pi_E\alpha'=&(Id-dPd_0^{-1}-Pd_0^{-1}d)\alpha'=\alpha'-dP{d_0^{-1}\alpha'}-Pd_0^{-1}d \alpha'\\=&\alpha'-\sum_{k=0}^N(-1)^k[d_0^{-1}(d-d_0)]^kd_0^{-1}d\alpha'\\=&\alpha'-d_0^{-1}d\alpha'+d_0^{-1}(d-d_0)d_0^{-1}d\alpha'-[d_0^{-1}(d-d_0)]^2d_0^{-1}d\alpha'+\cdots\\=&\alpha'-d_0^{-1}(d-d_0)\alpha'+[d_0^{-1}(d-d_0)]^2\alpha'-[d_0^{-1}(d-d_0)]^3\alpha'+\cdots
\end{align*}

Moreover, given an arbitrary form $\beta\in\Omega^{h}\cap Im\,d_0^{-1}$, that is $\beta=d_0^{-1}\xi$ for some $\xi\in \Omega^{h+1}$, we have that
\begin{align*}
    \Pi_{E_0}d\big(d_0^{-1}\xi\big)=&\Pi_{E_0}d\beta=\Pi_{E_0}d_0\beta+\Pi_{E_0}(d-d_0)\beta\\=&(Id-d_0d_0^{-1}-d_0^{-1}d_0)d_0d_0^{-1}\xi+\Pi_{E_0}(d-d_0)\beta\\=&d_0d_0^{-1}\xi-d_0d_0^{-1}d_0d_0^{-1}\xi+\Pi_{E_0}(d-d_0)\beta=\Pi_{E_0}(d-d_0)\beta\\=&\Pi_{E_0}(d-d_0)\big(d_0^{-1}\xi\big)\,,
\end{align*}
so that
\begin{align*}
    \Pi_{E_0}&d\Pi_E\alpha'=\Pi_{E_0}d\big(\alpha'-\sum_{k=0}^N(-1)^k[d_0^{-1}(d-d_0)]^kd_0^{-1}d\alpha'\big)\\=&\Pi_{E_0}(d-d_0)\big[\alpha'-d_0^{-1}(d-d_0)\alpha'+[d_0^{-1}(d-d_0)]^2\alpha'-[d_0^{-1}(d-d_0)]^3\alpha'+\cdots\big]\\=&\Pi_{E_0}\big(d_1+d_2-d_1d_0^{-1}d_1+d_3-d_1d_0^{-1}d_2-d_2d_0^{-1}d_1+d_1d_0^{-1}d_1d_0^{-1}d_1+\cdots\big)\alpha'\\=&\Pi_{E_0}\,\sum_{r=1}^N\bigg[d_r-\sum_{i=1}^{r-1}d_i\bigg(\sum_{m=1}^{r-i}(-1)^{m-1}\sum_{I_m^{r-i}}(d_0^{-1}d)_{I_m^{r-i}}\bigg)\bigg]\Pi_{E_0}\alpha\,,
\end{align*}
where we are using the same multi-index notation that was introduced in Remark \ref{remark 2.7}.

If we compare this final formula with the explicit expression for the $r^{th}$-differentials of the spectral sequence \eqref{rth diff d_0 inverse}, we can see the clear relationship between the operator $d\Pi_E$ applied to $\Pi_{E_0}\alpha$ and the differentials $\partial_r$.

It is however crucial to highlight that even though the explicit formulation of the operators coincide, the operators $d_c$ and $\partial_r$ act on different spaces.

As already pointed out, we have a clear correspondence between Rumin forms $E_0^h=Ker\,d_0\cap(Im\,d_0)^\perp$ and $\bigoplus_p E_1^{p,\ast}$. Indeed, in the special case of a form $\alpha\in Z_1^{p,-p+h}$ (an $h$-form of weight $p$ that belongs to $Ker\,d_0$) for which the Rumin differential of $\Pi_{E_0}\alpha$ has only differential order 1, then
\begin{itemize}
    \item $\alpha\in Z_1^{p,-p+h}$;
    \item $\Pi_{E_0}\alpha\in Z_1^{p,-p+h}\cap\big(B_1^{p,-p+h}\big)^\perp=E_0^{h}\cap\Omega^{p,-p+h}$;
    \item $d_c\Pi_{E_0}\alpha=\Pi_{E_0}d_1\Pi_{E_0}\alpha\in Z_1^{p+1,-p+h}\cap(B_1^{p+1,-p+h})^\perp=E_0^{h+1}\cap\Omega^{p+1,-p+h}$.
\end{itemize}
Compare this to the action of $\partial_1$ on the same $\alpha\in Z_1^{p,-p+h}$:
\begin{itemize}
    \item $\alpha\in Z_1^{p,-p+h}$;
    \item $[\alpha]\in Z_1^{p,-p+h}/B_1^{p,-p+h}=E_1^{p,-p+h}$;
    \item $\partial_1([\alpha])=\big[d_1\alpha\big]\in E_1^{p+1,-p+h}=Z_1^{p+1,-p+h}/B_1^{p+1,-p+h}$.
\end{itemize}
In order to consider the more general case, let us first point out that by Definition \ref{def 2.6}, we have the following set of inclusions
\begin{align*}
    B_1^{p,\ast}\subset B_2^{p,\ast}\subset\cdots\subset B_{l}^{p,\ast}\subset B_{l+1}^{p,\ast}\subset\cdots\\Z_1^{p,\ast}\supset Z_2^{p,\ast}\supset\cdots\supset Z_{l}^{p,\ast}\supset Z_{l+1}^{p,\ast}\supset\cdots
\end{align*}
Let us now assume that for a given form $\alpha\in Z_1^{p,-p+h}$ there exist $l_1,l_2,\ldots,l_k\in\mathbb{N}$ with $l_1<l_2<\cdots<l_k$ for which
\begin{align*}
    d_c\Pi_{E_0}\alpha\in \bigoplus_{i=1}^k\Omega^{p+l_i,-p-l_i+h+1}\,.
\end{align*}
This then implies in particular that $\alpha\in Z_{l_k}$, and hence it makes sense to compute the action of $\partial_i[\alpha]$ for $i=1,\ldots,l_k$. However for each $i$ we have that $[\alpha]$ is taken in a different quotient space $E_i^{p,-p+h}$:
\begin{itemize}
    \item $\alpha\in Z_{i}^{p,-p+h}\subset Z_1^{p,-p+h}$;
    \item $[\alpha]\in E_{i}^{p,-p+h}=Z_{i}^{p,-p+h}/B_{i}^{p,-p+h}\subset E_1^{p,-p+h}$;
    \item $\partial_i([\alpha])\in E_i^{p+i,-p-i+h+1}\subset E_1^{p+i,-p-i+h+1}$.
\end{itemize}
whereas the operator $d\Pi_{E}$ acts on $\Pi_{E_0}\alpha\in Z_1^{p,-p+h}\cap(B_1^{p,-p+h})^\perp$.

\end{proof}
% In order to fully understand how the subspaces $B_r^{p,\ast}$ and $Z_r^{p,\ast}$ change, let us consider the explicit case of the second Heisenberg group $\mathbb{H}^2$.
% \begin{example}\textbf{The second Heisenberg group}

% \end{example}

% \begin{remark}
% Even though in the classical construction of the Rumin complex we consider the operator $d_c$ as acting on $\alpha'=\Pi_{E_0}\alpha$, it is sufficient to require that only $d_0\alpha'=0$, since 
% \begin{align*}
%     d\Pi_{E}\alpha'=&d(Id-dPd_0^{-1}-Pd_0^{-1}d)\alpha'=d\alpha'-d^2Pd_0^{-1}\alpha'-dPd_0^{-1}d\alpha'\\=&(d-d_0)\alpha'-dPd_0^{-1}(d-d_0)\alpha'\,.
% \end{align*}
% \end{remark}
\begin{remark}

Morally, the idea behind the spectral sequence construction is to compute the de Rham cohomology of the underlying Carnot group by taking progressively smaller subcomplexes, whereas in the Rumin complex we consider all of these subcomplexes at the same time as acting on the first page of the spectral sequence.
\end{remark}

\section*{Acknowledgments}

F. Tripaldi is supported by Swiss National Science Foundation Grant nr 200020-191978.

		\bigskip
		\bibliographystyle{amsplain}

\bibliography{bibli}

\bigskip

\tiny{
\noindent
Antonio Lerario 
\par\noindent SISSA
\par\noindent Mathematics Area
\par\noindent via Bonomea, 265
34146, Trieste, Italy
\par\noindent
e-mail:lerario@sissa.it
}

\medskip

\tiny{
\noindent
Francesca Tripaldi 
\par\noindent Mathematisches Institut,
\par\noindent University of Bern, 
\par\noindent Sidlerstrasse 5
3012 Bern, Switzerland.
\par\noindent
e-mail: francesca.tripaldi@unibe.ch
}

\end{document}